\begin{document}
\title{The Schreier continuum and ends}


\author{Alex Clark
}
\address{Alex Clark, Department of Mathematics, University of Leicester, University Road, Leicester LE1 7RH, United Kingdom}
\email{adc20@le.ac.uk}

\author{Robbert Fokkink}

\address{Robbert Fokkink, DIAM Probability, TU Delft, Mekelweg 4, 2628CD Delft, Netherlands}
\email{R.J.Fokkink@tudelft.nl}

\author{Olga Lukina}
\address{Olga Lukina, Department of Mathematics, University of Leicester, University Road, Leicester LE1 7RH, United Kingdom}
\email{ollukina940@gmail.com}

\thanks{2000 {\it Mathematics Subject Classification}. Primary 57R30, 37C55, 37B45; \\ Secondary 53C12}

\thanks{Authors supported by NWO travel grant 040.11.132}
\thanks{AC and OL supported in part by  EPSRC grant EP/G006377/1}
\maketitle

\begin{abstract}
\noindent Blanc showed in his thesis that a compact minimal foliated space with a residual subset of $2$-ended leaves can contain only $1$ or $2$ ended leaves. In this paper we give examples of compact minimal foliated spaces where a topologically generic leaf has $1$ end, there is an uncountable set of leaves with $2$ ends and a leaf with $2n$ ends, for a given $n>1$. The examples we present are weak solenoids, which allows us to represent the graph of the group action on the fibre as the inverse limit of finite coverings of a finite graph, which we call the Schreier continuum, which we use to obtain the result. While in certain cases the problem can be reduced to the study of a self-similar action of an automorphism group of a regular tree, our geometric technique is more general, as it applies to cases where the action is not self-similar.
\end{abstract}


\newtheorem{Lemma}{Lemma}[section]
\newtheorem{Prop}{Proposition}[section]
\newtheorem{Cor}{Corollary}[section]
\newtheorem{Theor}{Theorem}[section]
\newtheorem{Def}{Definition}[section]
\newtheorem{Example}{Example}[section]
\newtheorem{Remark}{Remark}[section]
\newtheorem{Term}{Terminology}[section]
\newtheorem{Q}{Question}

\newcommand{\Id}{\mathrm{Id}}

\newcommand{\G}{\Gamma}

\newcommand{\g}{\gamma}
\newcommand{\de}{\delta}
\newcommand{\image}{\mathrm{image }\ }
\newcommand{\rank}{\mathrm{rank }}
\newcommand{\corank}{\mathrm{corank }}
\newcommand{\id}{\mathrm{id}}
\newcommand{\Ad}{\mathrm{Ad}}
\newcommand{\F}{\mathcal{F}}
\newcommand{\K}{\mathcal{K}}
\newcommand{\Aut}{\mathrm{Aut}}
\newcommand{\const}{\mathrm{const}}
\newcommand{\grad}{\mathrm{grad}}
\newcommand{\col}{\mathrm{col}}
\newcommand{\supp}{\mathrm{supp}}
\newcommand{\Hom}{\mathrm{Hom}}
\newcommand{\breukje}[2]{\mbox{\small $\frac{#1}{#2}$}}
\newcommand{\mb}[1]{\mbox{$\mathbb{#1}$}}
\newcommand{\ms}[1]{\mbox{$\mathscr{#1}$}}
\newcommand{\mc}[1]{\mbox{$\mathcal{#1}$}}
\newcommand{\trm}[1]{\mbox{${\textrm{#1}}$}}

\renewcommand{\theequation}{\thesection.\arabic{equation}}

\section{Introduction}

A compact foliated space $X$ is a compact metrisable space together with a decomposition of $X$ into subsets $S_\alpha$, called \emph{leaves}, such that for each $x \in X$ there is a chart $\varphi_x:U_x \to \mb{R}^n \times Z$, where $Z$ is a compact metrisable space, and the restriction of $\varphi_x$ onto a connected component of $S_\alpha \cap U$ is constant on the second component \cite{CCbook1}. One class of examples of foliated spaces arising in continuum theory are weak solenoids; i.e., the inverse limits $X_\infty = \lim_{\longleftarrow}\{f^i_{i-1}:X_i \to X_{i-1}\}$, where $X_i$ is a closed manifold, and each bonding map $f^i_{i-1}$ is a finite-to-one covering map \cite{FO}.

Weak solenoids are natural generalisations of the well-known one-dimensional solenoids and have the structure of a foliated
bundle over a closed manifold with a Cantor set fibre. Leaves in solenoids are their path-connected components, and every leaf in a solenoid is dense. One-dimensional solenoids are important in the study of flows and
have been identified as minimal sets of smooth flows~\cite{BF,GST,GT,Kan,T} and are
known to occur generically in Hamiltonian flows on closed
manifolds~\cite{MM}. The more general solenoids introduced in~\cite{Mc} provide a class of higher-dimensional continua
that have been the object of great interest in the study of continua ~\cite{RT,S,FO}. It was shown in~\cite{CH1} that the
equicontinuous minimal sets of foliations having the local structure
of a disk in a leaf times a Cantor set are solenoids, and
in~\cite{CH} examples are provided of certain classes of solenoids
embedded as minimal sets of smooth foliations of manifolds. End structures of leaves provide the means to study their behaviour at infinity, and so are important for the study of the inner structure of solenoids.

Ghys shows in his seminal work~\cite{G} that the
leaves of almost all (with respect to an appropriate harmonic
measure) points of a compact foliated space have either $0$, $1$,
$2$ or a Cantor set of ends. The harmonic measures used are those introduced by Garnett \cite{Gar} and locally have the form of the product of a harmonic function and the Riemannian volume on the plaques of the leaves in foliation charts. These measures are used in foliation theory since, in contrast to invariant transverse measures, they always exist and at the same time many of the basic results from ergodic theory carry over to these harmonic measures.  Inspired by this
result, Cantwell and Conlon~\cite{CC1} obtained similar results in
the topological setting. They show that for a complete foliated space with a totally recurrent leaf (in particular, for compact minimal sets), the leaves of all points of a residual subset either have no
ends, one end, two ends or a Cantor set of ends. Blanc \cite{B} showed that if a compact minimal foliated space $X$ has a topologically generic leaf with $2$ ends, then any other leaf in $X$ has either $1$ or $2$ ends. In this paper we show that if a compact foliated space has a topologically generic leaf with $1$ end, then the maximal number of ends can be made an  arbitrarily large finite number. In particular, we prove the following result.

\begin{Theor}\label{Theor-one}
Given $n>1$, there exists a weak solenoid $X_n$ where the union of leaves with $1$ end forms a residual subset, the union of leaves with $2$ ends forms a meager subset and the number of such leaves is uncountable, and there is a single leaf with $2n$ ends.
\end{Theor}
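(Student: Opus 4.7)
The plan is to realize $X_n$ as an inverse limit $\varprojlim X_i$ of finite-sheeted coverings of a base manifold $X_0$ having a finite graph $\Gamma_0$ (for instance a bouquet of circles, so $\pi_1(X_0)$ is free) as a deformation retract. Each covering $X_i \to X_{i-1}$ restricts to a finite graph covering $\Gamma_i \to \Gamma_{i-1}$, and by the Schreier continuum formalism announced in the abstract, the leaves of $X_n$ and their end structure are faithfully encoded by the leaves (path components) of $\Sigma_n = \varprojlim \Gamma_i$. Thus the theorem reduces to a combinatorial construction of a tower $\{\Gamma_i\}$ of finite covers whose inverse limit has the prescribed leaf end-structure.

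I would build the $\Gamma_i$ inductively so that each contains a distinguished apex vertex $v_i$ at which the graph exhibits a locally $2n$-fold symmetric ``star'', with $2n$ arms emanating from $v_i$. The coverings $\Gamma_{i+1} \to \Gamma_i$ send $v_{i+1} \mapsto v_i$, lengthen each arm, and decorate it with bounded, deliberately asymmetric side-branches at every vertex other than the apex. The coherent sequence $(v_i)$ singles out a unique fibre point $x^\star \in \Sigma_n$ whose leaf is the union of $2n$ infinite rays meeting at $x^\star$, yielding a graph with exactly $2n$ ends.

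The remaining fibre points correspond to address sequences $(w_i) \in \varprojlim V(\Gamma_i)$ with $w_i \ne v_i$ eventually. Addresses that ultimately track a single bi-infinite ``ridge line'' through the lengthening arms of successive $\Gamma_i$ produce leaves that are bi-infinite lines with bounded decorations, hence 2-ended; the freedom to pick a ridge at each level makes the set of such addresses uncountable, while the requirement of eventual coherence along one ridge makes it meager in the Cantor fibre. Generic addresses, by contrast, switch arms infinitely often and produce a leaf that looks like a single ray with bounded decorations, i.e., a 1-ended leaf; a standard Baire category argument then shows residuality of the 1-ended locus.

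The main obstacle is the explicit geometric design of the tower $\{\Gamma_i\}$ so that three conditions hold simultaneously: (i) each $\Gamma_i \to \Gamma_{i-1}$ is a genuine finite graph covering, equivalently a finite-index subgroup inclusion of $\pi_1(\Gamma_0)$; (ii) $x^\star$ is the \emph{only} fibre point whose neighbourhood admits the full $2n$-fold star, forcing the decorations to break symmetry away from every apex and thereby preventing accidental extra $2n$-ended leaves; and (iii) the decorations remain bounded so as not to spawn spurious additional ends on any leaf. Reconciling the symmetric structure at the apex with uniformly asymmetric decorations elsewhere, while preserving the covering condition at every level, is the combinatorial core of the argument and is precisely what the Schreier continuum technique is designed to handle.
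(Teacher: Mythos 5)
Your plan follows the same overall strategy as the paper: reduce the end structure of leaves of a weak solenoid to the end structure of path components of an inverse limit of finite graph coverings (the Schreier continuum), exhibit one distinguished fibre point whose leaf has $2n$ ends, show that ``ridge-tracking'' (in the paper's language, \emph{dyadic}) addresses give an uncountable meager family of $2$-ended leaves, and that ``arm-switching'' (\emph{flip-flopping}) addresses give a residual set of $1$-ended leaves. However, the proposal stops precisely where the theorem's content begins. You explicitly defer ``the explicit geometric design of the tower $\{\Gamma_i\}$'' as ``the combinatorial core of the argument,'' but that core is the proof: without an actual tower one cannot verify that the apex leaf has exactly $2n$ ends, that no other leaf accidentally acquires more than $2$ ends, or that the $1$-/$2$-ended dichotomy for non-apex addresses holds. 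The paper supplies this by an explicit surgery on a genus~$2$ surface (gluing $2n+1$ cut copies along $2n$ handles, generalising Schori's construction), computes the subgroup chain $G_k$ and the Schreier graphs $\Lambda_k$ concretely, and then proves the end counts by metric estimates on the $\Lambda_k$ (Lemmas on ball exhaustions, coboundaries of fundamental domains, and lengths of projected paths). None of these verifications is routine; e.g.\ the proof that the exceptional leaf has exactly $4$ ends already requires showing that $L_{\mathrm{id}}\setminus B(\mathrm{id},N_k)$ has \emph{at most} four unbounded components, via a coboundary count in $\Lambda_i$.

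There is also a technical misstep in your setup: a closed manifold $X_0$ cannot have a bouquet of circles as a deformation retract, and a weak solenoid (by the definition used here) must be an inverse limit of \emph{closed manifolds}, not of graphs. The paper instead takes $X_0$ a genus~$2$ surface, whose fundamental group is a one-relator group, builds the Schreier space as the union of lifts of generator loops, and then proves a separate retraction lemma (Lemma~\ref{Lemma:simplifiedSchreier}) showing that collapsing the $\alpha,\beta$-loops does not change the number of ends of any leaf, which is what legitimately reduces the computation to graphs over the figure eight. Relatedly, you assume a tower of graph coverings can simply be ``restricted from'' manifold coverings, but the construction must run in the other direction: one must first realise the desired subgroup chain inside $\pi_1(X_0)$ by genuine finite coverings of surfaces, which is exactly what the cut-and-reglue construction accomplishes. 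So the approach is right in outline, but both the realisation step and all of the end-counting arguments are missing.
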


For each $n>1$ in Theorem \ref{Theor-one} an example of a foliated space with required properties is obtained by a suitable modification of the Schori solenoid \cite{S}. The Schori solenoid is the inverse limit of an inverse sequence of non-regular $3$-to-$1$ coverings of a genus $2$ surface $X_0$ and, as any solenoid, is a foliated fibre bundle with the base $X_0$. An important property is that every leaf $L$ in a foliated bundle $p:E \to X_0$ is a covering space of the base, and so by the result of Scott~\cite[Lemma 1.2]{Sc} the number of ends of $L$ is the same as the number of ends of the pair of groups $(\pi_1(X_0,p(x)), p_*\pi_1(L,x))$, or the number of ends of the Schreier diagram of the coset space $\pi_1(E,p(x))/ p_*\pi_1(L,x)$. The Schreier space of $E$ is then the set $\Lambda_\infty$ of all Schreier diagrams of leaves, bound together in a suitable topology so that they preserve the essential dynamical properties of $E$. In the case when $E = X_\infty$ is a weak solenoid, there are intermediate coverings $ E \to^{p_i}  X_i \to^{p^i_0}   X_0$ which allow us to obtain a representation of $\Lambda_\infty$ as the inverse limit of finite graphs, and make computations by studying the projections of path components of $\Lambda_\infty$ onto these finite graphs.

It is interesting to compare these results with the recent results~\cite{BDN,DDMN}, where the number of ends of a large class of orbital Schreier graphs arising in the context of self-similar group actions is determined. There,  general results are obtained, showing that in a measure-theoretic sense most orbital Schreier graphs have either one or two ends, and for most self-similar groups the generic end structure is one. In the case when a weak solenoid $X_\infty$ has a representation $X_\infty = \lim_{\longleftarrow}\{ p^i_{i-1}:X_i \to X_{i-1}\}$ such that every covering map $p^i_{i-1}$ has the same degree and does not significantly differ from the other covering maps in the sequence, the image of the monodromy representation $\pi_1(X_0,p(x)) \to \textrm{Homeo} (F)$, where $F \cong p^{-1}(p(x))$ is a fibre of the weak solenoid, can be identified with a group of automorphisms of a regular tree. Such an action is self-similar, and is generated by a finite automaton. However, the class of weak solenoids is a lot richer than those with self-similar action of the monodromy group, and our geometric approach is applicable to any of them. In Section \ref{subsec-nonselfsimilar} we compute end structures for a weak solenoid which is the inverse limit of a sequence of covering maps of variable degree, where a map of each degree occurs an infinite number of times. As is well-known, for solenoids of dimension $1$, i.e. inverse limits of finite coverings of $\mb{S}^1$, equivalence classes of sequences of degrees provide a classification up to a homeomorphism, and for dimension higher than $1$ determining whether a weak solenoid has a representation as the inverse limit of coverings of constant degree is by no means a trivial question. Our geometric approach  overcomes this difficulty.

The results of \cite{BDN,DDMN} reveal connections of our work to the study of Julia sets, and leads to interesting questions concerning the interplay between topological and measure-theoretical genericity of various properties of foliated manifolds, such as those in Section \ref{sec:conclusions}.

In detail, our paper is organised as follows. In Section
\ref{sec:sc} we recall some background knowledge from algebraic topology and foliation theory, and give a brief outline of the computation technique of the Schreier continuum. Section
\ref{sec:schoriexample} contains computations of the end structures
for some specific solenoids.
Section
\ref{sec:conclusions} indicates some open questions.

\section{Preliminaries: ends of covering spaces and foliated bundles}\label{sec:sc}

The set of ends of a non-compact separable metric topological space $X$ is a set of ideal points at infinity that compactify $X$. Ends appear first in the work of Freudenthal \cite{F}, and the compactification by ends is often called the \emph{Freudental compactification}. More precisely \cite{CCbook1}, given a sequence of compact subsets
 $$K_{1} \subset K_{2} \subset K_{3} \subset \cdots \phantom{--} {\rm with} \phantom{--} \bigcup_i K_{i} = X$$
consider chains of unbounded connected components of $X \backslash K_{i}$
   $$\{U_{i}\}: U_{1} \supset U_{2} \supset U_{3} \supset \ldots  \phantom{--} {\rm where} \phantom{--} \bigcap_i U_{i} = \emptyset.$$
Set $\{U_{i}\} \sim \{V_{j}\}$ if for any $i$ there is a $j>i$ such that
  $$(U_{i}\cap V_{i}) \supset (U_{j} \cup V_{j}).$$
Then an \emph{end} $e$ of $X$ is an equivalence class of chains $\{U_{i}\} $. Denote by $\mathcal{E}(X)$ the set of ends, and set $X^* = X \cup \mathcal{E}(X)$. Put a topology on $X^*$ by saying that sets open in $X$ are open in $X^*$, and if $\{U_{i}\}$ is a sequence representing an end $e$, then $U_{i}$ together with all ends contained in $U_{i}$ is open. With this topology, $X^*$ is compact, $X$ is open in $X^*$ and $\mathcal{E}(X)$ is a totally disconnected subset of $X^*$ \cite{CCbook1}.

\subsection{Ends of covering spaces}

Recall (see, e.g. \cite{Bow}) that given a group $G$ generated by
$S=\{s_1,\dots,s_n\}$, the \emph{Cayley graph} $\G$ is the graph
whose vertices are the elements of $G$ and for which there is an
edge labelled by $s_i \in S$ joining $g$ to $g'$ if and only if
$g'=g\,s_i$. Then $G$ acts on the left of $\G$ in a natural way.
Given a subgroup $C$ of $G$ the \emph{Schreier diagram} of $(G,C)$
is the orbit space $\G/C$ of the left action of the subgroup $C$ on
$\G$. While the actual construction of this graph depends on the
choice of generators $S$, the number of ends of the graph is
independent of this choice (see, e.g. \cite{Bow}).

In his classic work Hopf~\cite{Ho} showed that a non-compact regular covering space of a compact polyhedron has
either one (e.g. the plane), two (e.g. the line) or a Cantor set of
ends (e.g. the universal cover of the figure eight). Here a covering space $p:L \to B$ is regular if $p_*\pi_1 (L, x)$ is a normal subgroup of $\pi_1(B,p(x))$, and geometrically the number of ends of $L$ coincides with that of the Cayley diagram of the quotient group $\pi_1(B,p(x))/p_*\pi_1 (L, x)$. In the case when $p: L\to B$ is non-regular, the generalisation of Hopf's theorem to irregular coverings (see, e.g., Scott~\cite[Lemma 1.1 - 1.2]{Sc}) yields that the number of ends of $L$ is the same as the number of ends of the Schreier diagram of the coset space $\pi_1(B,p(x))/p_*\pi_1 (L, x)$.

\subsection{Ends of leaves in foliated bundles}

Let $E$ be a compact metrisable space with foliation $\mathcal{F}$, that is \cite{CCbook1}, there is a decomposition of $E$ into disjiont subsets $\{L_\alpha\}_{\alpha \in A}$ called leaves of the foliation $\mathcal{F}$, such that each $x \in E$ has an open neighborhood $U_x$ with a homeomorphism $\varphi_x:U_x \to V_x \times Z_x$, where $V_x \subset \mathbb{R}^n$ and $Z_x$ is a compact metrisable space, and connected components of $L_\alpha \cap U_x$ are given by fixing values in $Z_x$. The sets $\varphi_x^{-1}(V_x \times \{z\})$, $z \in Z_x$, are open sets of the \emph{leaf topology} on $L_\alpha$. If the transition maps $\varphi_x \circ \varphi^{-1}_y$ are differentiable with respect to coordinates on $\mathbb{R}^n$, then the leaves $L_\alpha$ are smooth manifolds, and $\mathcal{F}$ is a smooth foliation.

Suppose there is a locally trivial projection $p:E \rightarrow B$ on a closed manifold $B$, such that leaves of $\F$ are transverse to the fibres of $p$. Then $p:E \to B$ is a \emph{foliated bundle} (see, e.g., Candel and Conlon \cite[Example 2.1.5]{CCbook1}), and, in particular, for each leaf $L \subset E$ the restriction $p|_L: L \to B$ in the leaf topology is a covering space.
Given $b \in B$, there is a \emph{total holonomy} homomorphism
\[
h : \pi_1(B,b) \rightarrow \mathrm{Homeo}(F)
\]
determined by the lifting of loops in $B$ based at $b$ to paths
contained in the leaves of $\F$. This allows one to translate many
questions about $\F$ into questions involving subgroups of
$\mathrm{Homeo}(F)$. Associated to each point $x \in F$ is the
subgroup of $\pi_1(B,b)$ corresponding to the stabiliser of $x$ for
the induced action on $F$, which we will refer to as the
\emph{kernel} of $x$ and denote $\K_x$.

In particular, if $L_x$ is a leaf containing $x$, then by standard algebraic topological arguments (see, e.g., \cite{Massey}) we have $\K_x = p_* \pi_1(L_x,x)$, and the leaf $L_x$ is homeomorphic to the quotient $\widetilde{B}/\K_x$, where $\widetilde{B}$ is the universal cover of $B$. It is clear that if $y \in L_x$, then $\K_x = \K_y$.

When the foliated bundle $p:E \to B$ is principal (that is, when the bundle automorphisms act transitively on the fibres of $p$), then all the leaves of $\F$ are regular coverings of $B$ and so Hopf's theorem applies. Moreover, in this case the bundle automorphisms yield a homeomorphism between any two leaves and so all leaves have homeomorphic end spaces, either $0$ (compact), $1$, $2$ or a Cantor set. If $p:E \to B$ is not principal, the kernel $\K_x$, $x \in E$, may vary depending on the point.

\subsection{The Schreier space of a foliated bundle}\label{subsec:schreierspace}
We construct the \emph{Schreier space} of a foliated bundle $p:E \to B$, where $B$ is a closed manifold. We fix a finite set of generators $S=\{[\g_1],\dots,[\g_n]\}$ of  $\pi_1(B,b)$ that are represented by loops $\g_i$ in $B$ that intersect pairwise only at the base point $b$. If the dimension of $B$ is $2$ we can choose the generators to be given in the standard way by the boundary edges of a polygon whose quotient is homeomorphic to $B$, and when the dimension of $B$ is greater than $2$ it is not hard to see how to construct these loops given any finite set of generators and local charts.

\begin{Def}
For a given foliated bundle $p:E \to B$ with a closed manifold $B$ as base and generators $S=\{[\g_1],\dots,[\g_n]\}$ of  $\pi_1(B,b)$ and fibre $F$ as above, the \emph{Schreier space} $\mathcal{S}$ is the subspace of $E$ formed by taking the union of all lifts of the paths $\{\g_1,\dots,\g_n\}$ to paths in $E$ starting at points of $F$.
\end{Def}

It is then straightforward that $L_x \cap \mathcal{S}$ in the leaf topology is the Schreier graph of the coset space $\pi_1(B,b)/\mathcal{K}_x$, and so has the same number of ends as the leaf $L_x$.

We shall refer to the sets of the form $L \cap \mathcal{S}$ for a
leaf $L$ of $\F$ as \emph{leaves} of $\mathcal{S}$ and a set formed
by a union of leaves will be called \emph{saturated} as with
foliations. A compact saturated subset of $\mathcal{S}$ is
\emph{minimal} if it is the closure of each of its leaves. If the
sequence $\{x_i\}_{i=1}^\infty$ converges to $x$ in $\mathcal{S}$,
then the corresponding plaques in a foliation chart of $x$ converge
to the plaque of $x$. Hence, since the leaves of $\mathcal{S}$ are
formed by lifting paths in $B$, a set is saturated in $\mathcal{S}$
if and only if the corresponding set is saturated in $E$ and a set
in $\mathcal{S}$ is minimal if and only if the corresponding set is
a compact minimal subset of $E$.  Since the leaves of $\mathcal{S}$
are connected, minimal sets are compact and connected; that is, they
are continua. Thus, we refer to a minimal subcontinuum $\Lambda$ of
$\mathcal{S}$ as the \emph{Schreier continuum} of the corresponding
minimal set of $E$. If the foliation of the bundle $p:E \to B$ is
minimal, the notions of the Schreier space and the Schreier
continuum are interchangeable.

\section{Schreier continua for solenoids}\label{sec:schreiercontsolenoids}


In this section we prove Theorem \ref{Theor-one}, that is, for any $n> 1$ we construct a compact foliated space $X_n$ such that the union of leaves with $1$ end is a residual subset of $X_n$, the union of leaves with $2$ ends forms a meager subset of $X_n$ and the number of such leaves is uncountable, and there is a single leaf with $2n$ ends.

We are going to construct these examples as inverse limits of sequences of finite coverings of a genus $2$ surface, i.e. \emph{weak solenoids}. For convenience we recall basic facts about solenoids in Section \ref{subsec-weaksolenoids}, where we also give some algebraic examples of computations of end structures. The proof of Theorem \ref{Theor-one} is obtained on the basis of the Schori solenoid \cite{S}, an example where a direct algebraic technique is not
sufficient.

The proof of Theorem \ref{Theor-one} for $n=2$ is detailed in Section \ref{sec:schoriexample}. In Section \ref{subsec:generalisedSchori} we show how
to modify Schori's solenoid in such a way that the exceptional leaf
has any given even number of ends. In Section \ref{subsec:RTsolenoid} we describe the Schreier continuum for the Rogers-Tollefson example (see also Example \ref{Example:RT}). In Section \ref{subsec-nonselfsimilar} we give an example of a solenoid, based on the Schori construction, where the group of automorphisms of the fibre acts non self-similarly.

\subsection{Weak solenoids as foliated bundles}\label{subsec-weaksolenoids}

A general class of examples of foliated bundles with a Cantor set fibre can be found in weak solenoids. Here, a solenoid $X_\infty $
will refer to the inverse limit
   \begin{align}  X_\infty = \lim_{\longleftarrow} \{ X_k, f^k_{k-1}, \mb{N}_0 \} \end{align}
of an inverse sequence of closed manifolds $X_k$ where each bonding map $f^k_{k-1}:X_k \rightarrow X_{k-1}$ is a covering map of index greater than one. The projection $f_0:X_\infty \rightarrow X_0$ is a fibre bundle projection
with a profinite structure group and Cantor set fibre (see Fokkink and Oversteegen \cite[Theorem 33]{FO}). When a solenoid is homogeneous (i.e., when the homeomorphism group acts transitively on the solenoid), the solenoid can be represented as the inverse limit of an inverse sequence where each covering map $f^k_0$ is regular~\cite{FO}, and in this case the associated bundle is principal (see McCord \cite[Theorem 5.6]{Mc}). Thus, a solenoid can be considered a natural generalisation of  covering spaces of closed manifolds where the fibre is no longer discrete but instead totally disconnected, and where regular covering spaces correspond to homogeneous solenoids. Locally, a solenoid is homeomorphic to
$D \times \text{\rm Cantor Set}$, where $D$ is a Euclidean disk of the same the dimension as the base manifold $X_0$.

Denote by $G_k$ the image of the homomorphism
  $$(f^k_0)_*:\pi_1(X_k,x_k) \rightarrow \pi_1(X_0,x_0),$$
then the kernel at $x_\infty = (x_k)$, as defined in the previous section, is given by
\[
\mathcal{K}_{x_\infty}=\bigcap_{k \in \mb{N}_0} \, G_k .
\]

Thus, the Schreier continuum associated to $X_\infty$ and some fixed finite set of generators $S$ of $\pi_1(X_0,x_0)$ is given by
\[
\Lambda=\lim_{\longleftarrow} \{ \Lambda_k,\, \nu_{k-1}^k,\, k \in \mb{N}_0 \}.
\]
where $ \Lambda_k$ is the Schreier diagram for the coset space $\pi_1(X_0,x_0)/G_k$. Alternatively, consider $\Lambda_0$ to be the image of the Schreier space $\mc{S}$ under the covering projection $f_0$, and each $\Lambda_k$ as the lift of $\Lambda_0$ to $X_k$ by the covering map $f_0^k$.

\begin{Example}\label{Example:dyadics}
{\rm In the case that the solenoid is a principal bundle and all
path-connected components are homeomorphic, the end structure can
typically be found directly by analysing the kernels $\mc{K}_x$ at
points $x \in F$, and all possible end structures do occur. For
example, let $X_k = \mb{S}^1$ and each $f^k_{k-1}$ be a $2$-fold
covering map. The inverse limit
  \begin{align*}  \Sigma_2 & = \lim_{\longleftarrow} \{ \mb{S}^1, f^k_{k-1}, \mb{N}_0 \}   \end{align*}
is usually called the dyadic solenoid. The universal cover of each
leaf is $\mb{R}$ and for each $x \in \Sigma_2$ the kernel $\mc{K}_x$
is trivial. Therefore, each leaf in $\Sigma_2$ is homeomorphic to
$\mb{R}$ and so has $2$ ends. The dyadic solenoid has a self-similarity structure and its ends could be treated in the context of self-similar group actions as in \cite{BDN}; however, if the bonding maps $f^k_{k-1}$ are distinct primes and the resulting solenoid has no self-similarity, the results of  \cite{BDN} do not apply but this solenoid can be treated by our technique in the same way as the dyadic solenoid.

Generalising this example to $2$
dimensions one obtains a solenoid $\Sigma_2 \times \Sigma_2$ which
fibres over a $2$-torus $\mb{S}^1 \times \mb{S}^1$. The universal
cover of each leaf is $\mb{R}^2$, and for each $x \in \Sigma_2 \times \Sigma_2$
the kernel $\mc{K}_x$ is trivial and so each leaf has $1$ end. It
is known that a free group on $2$ generators $F_2$ can be realised
as the fundamental group of a $3$-manifold, and $F_2$ is
geometrically residually finite (see Scott~\cite{Sc1} for general
results about residually finite groups). Thus, choosing an
appropriate decreasing chain of subgroups $G_i$ of finite index with $\cap_i G_i =
\emptyset$ with each $G_i$ normal in $F_2$, one can construct a
solenoid in which each leaf has a Cantor set of ends.}
\end{Example}

\begin{Example}\label{Example:RT}
{\rm More interesting examples of ends occur when the solenoid does
not have the structure of the principal bundle and the structure of
the set of ends vary from leaf to leaf. One such example is given by
Rogers and Tollefson \cite{RT}. Their solenoid is the inverse limit
of $2$-fold coverings of the Klein bottle $K_i$ by itself
  $$K_\infty = \lim_{\longleftarrow} \{ K_i, f^i_{i-1},\mb{N}_0 \}.$$
The solenoid $K_\infty$ is double covered by the solenoid $\mb{S}^1
\times \Sigma_2$  with non-trivial covering transformation of the
covering map
   \begin{align*} p_\infty &: \mb{S}^1 \times \Sigma_2 \to K_\infty \end{align*}
represented by the involution $\beta \times \alpha_\infty:  \mb{S}^1 \times \Sigma_2 \to  \mb{S}^1 \times \Sigma_2$, where $\beta$ is the rotation in the circle by $\pi$ and $\alpha_\infty$ is the map induced by the reflections $\alpha_i:\mb{S}^1 \to \mb{S}^1$. The involution $\beta \times \alpha_\infty$ has a unique fixed point $x$, so it fixes a single path-connected component of $\mb{S}^1 \times \Sigma_2$ and is a homeomorphism when restricted to any other path-connected component. Thus every path-component in $K_\infty$ other than the exceptional one is homeomorphic to $\mb{S}^1 \times \mb{R}$ and has $2$ ends. The exceptional path-component is homeomorphic to the quotient space
  \begin{align*} \mb{S}^1 \times \mb{R} / \big\{ (t,s) \sim (\beta(t),-s) \big\}, \end{align*}
it is non-orientable and has $1$ end.
}
\end{Example}

\subsection{Schori solenoid}\label{sec:schoriexample}

The Schori example is the solenoid
  \begin{align*}X_{\rm{Sch}} & = \lim_{\longleftarrow} \{ X_k, f^{k+1}_k, \mb{N}_0\}, \end{align*}
where $X_0$ is a genus $2$ surface, and for every $k \in \mb{N}_0$
the bonding map $f^{k+1}_k:X_{k+1} \to X_k$ is a $3$-to-$1$
non-regular covering projection. We recall briefly the construction from \cite{S}.

The construction is by induction. Let $X_0$ be a genus $2$ surface. For $n > 0$ let an $n$-handle be a $2$-torus with $n$ mutually disjoint open disks taken out, then $X_0$ is a union of two $1$-handles $H_0$ and $F_0$ which intersect along their boundaries. Let $C_0$ and $D_0$ be simple closed curves in $H_0$ and $F_0$ respectively (see Figure \ref{fig:M0-pic}, a)). For $k> 0$ let $X_k$ be a genus $m_k=3^k+1$ surface with two $2^k$-handles $H_k$ and $F_k$ distinguished, and let two simple closed curves $C_k$ and $D_k$ be chosen in $H_k$ and $F_k$ respectively. Then the next component in the sequence is obtained in the following manner: take out the curves $C_k$ and $D_k$ from $X_k$ and pull the cut handles apart by an appropriate homeomorphism to obtain a cut surface $\widehat{X}_k$. Let $\bar{X}_k = {\rm{Cl}}(\widehat{X}_k)$, and denote by $C_k'$ and $C_k''$ (resp. $D_k'$ and $D_k''$) the boundary circles in $H_k$ (resp. $F_k$) (see Figure \ref{fig:M0-pic}, b)). Consider three copies $\bar{X}_k^1$, $\bar{X}_k^2$ and $\bar{X}_k^3$ of $\bar{X}_k$, so that for $i=1,2,3$ $\bar{H}_k^i$ and $\bar{F}_k^i$ are the cut handles in $\bar{X}_k^i$, and ${C_k^i} '$ and ${C_k^i}''$ (resp. ${D_k^i} '$ and ${D_k^i}''$) are the boundary circles in $H_k^i$ (resp. $F_k^i$). Then in $H_k^i$-handles identify ${C_k^1}'$ with ${C_k^1}''$, ${C_k^2}'$ with ${C_k^3}''$, and ${C_k^3}'$ with ${C_k^2}''$, and in the $F_k^i$-handles identify ${D_k^1}'$ with ${D_k^2}''$, ${D_k^2}'$ with ${D_k^1}''$, and ${D_k^3}'$ with ${D_k^3}''$ (see Figure \ref{fig:M0-pic}, c)). Denote the identification space by $X_{k+1}$, and let $H_{k+1}$ (resp. $F_{k+1}$) be the image of $H_k^2 \bigsqcup H_k^3$ (resp. $F_k^1 \bigsqcup F_k^2$) under the identification map (see Figure \ref{fig:M0-pic}, d)). Define the mapping $f^{k+1}_k:X_{k+1} \to X_k$ by sending a point $(x,i) \in \bar{X}_k^i$ to $x \in X_k$. The obtained map is a $3$-to-$1$ covering projection, and set
 $$X_{\rm{Sch}} = \lim_{\longleftarrow} \{X_k,f^{k+1}_k,\mb{N}_0\}.$$

\begin{figure}
\centering
\includegraphics [width=105mm] {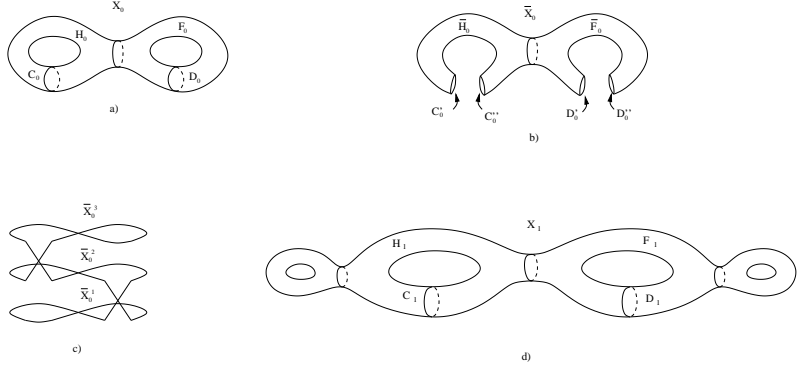}
\caption{Construction of the Schori example: a) Choice of the handles $H_0$ and $F_0$ and closed curves $C_0$ and $D_0$ in $X_0$, b) The cut surface $\bar{X}_0$, c) Identifications between $\bar{X}^i_0$, $i=1,2,3$. Each $\bar{X}^i_0$ is represented by a cut copy of figure $8$, and identifications are depicted with straight lines, d) The surface $X_1$ and the choice of the handles $H_1$ and $F_1$ and closed curves $C_1$ and $D_1$.}
 \label{fig:M0-pic}
\end{figure}

\subsection{The Schreier continuum in the Schori example}\label{subsec:SchSchori}

Consider the Schori solenoid $X_{\rm{Sch}}$, and let $x_0 \in H_0 \cap F_0$. For each $k >0$ there is a unique point $x_k \in X_k$ such that $x_k  \in H_k \cap F_k$ and
  $$f^k_0 (x_k) = f^k_{k-1} \circ f^{k-1}_{k-2} \circ \cdots \circ f^1_0(x_k) = x_0,$$
so $(x_k)_{k=0}^\infty \in X_{\rm{Sch}}$. In the sequel we omit the subscript and superscript in the notation for a point in $X_{\rm{Sch}}$ writing $(x_k)$ instead of $(x_k)_{k=0}^\infty$. Denote by $f_k: X_{\rm{Sch}} \to X_k$ the projection, and by $F$ the fibre $f_0^{-1}(x_0)$ of the bundle $p_0:X_{\rm{Sch}} \to X_0$ at $x_0$.

\subsubsection{The group chain and the Schreier continuum}

Let $G_0 =\pi_1(X_0,x_0)$, and for $k> 0$ denote $G_k = {f^k_0}_* \pi_1(X_k,x_k)$. As it was shown in Section \ref{sec:schreiercontsolenoids}, the Schreier continuum for a pointed space $(X_{\rm{Sch}},(x_k))$ is constructed as an inverse sequence of Schreier diagrams $\Lambda_k'$ of pairs of groups $(G_0,G_k)$. So the first step in the construction is to compute the group chain $G_0 \supset G_1 \supset G_2 \supset \ldots$

Choose loops $a,b,\alpha,\beta$ representing the generators of the fundamental group $\pi_1(X_0,x_0)$ in the standard way (see Section \ref{subsec:schreierspace}) with a relation
  \begin{align*}{\rm{rel}}_0 & =[a, \alpha][b,\beta] = a \alpha a^{-1} \alpha^{-1} b \beta b^{-1} \beta^{-1},\end{align*}
where concatenation denotes the usual multiplication of paths. Calculation of $G_k$ by an inductive procedure gives the following. For $k=0$, distinguish the following subsets of generators of $G_0$,
\begin{subequations}\label{eq:recformula}
 \begin{align}\label{eq:recformula0} S_{0ab} & = \emptyset, &  S_{0ba} & = \emptyset \\ S_{0a} & = \{a, \alpha\},&  S_{0b} & = \{b, \beta\}. \end{align}

\begin{figure}
\centering
\includegraphics [width=105mm] {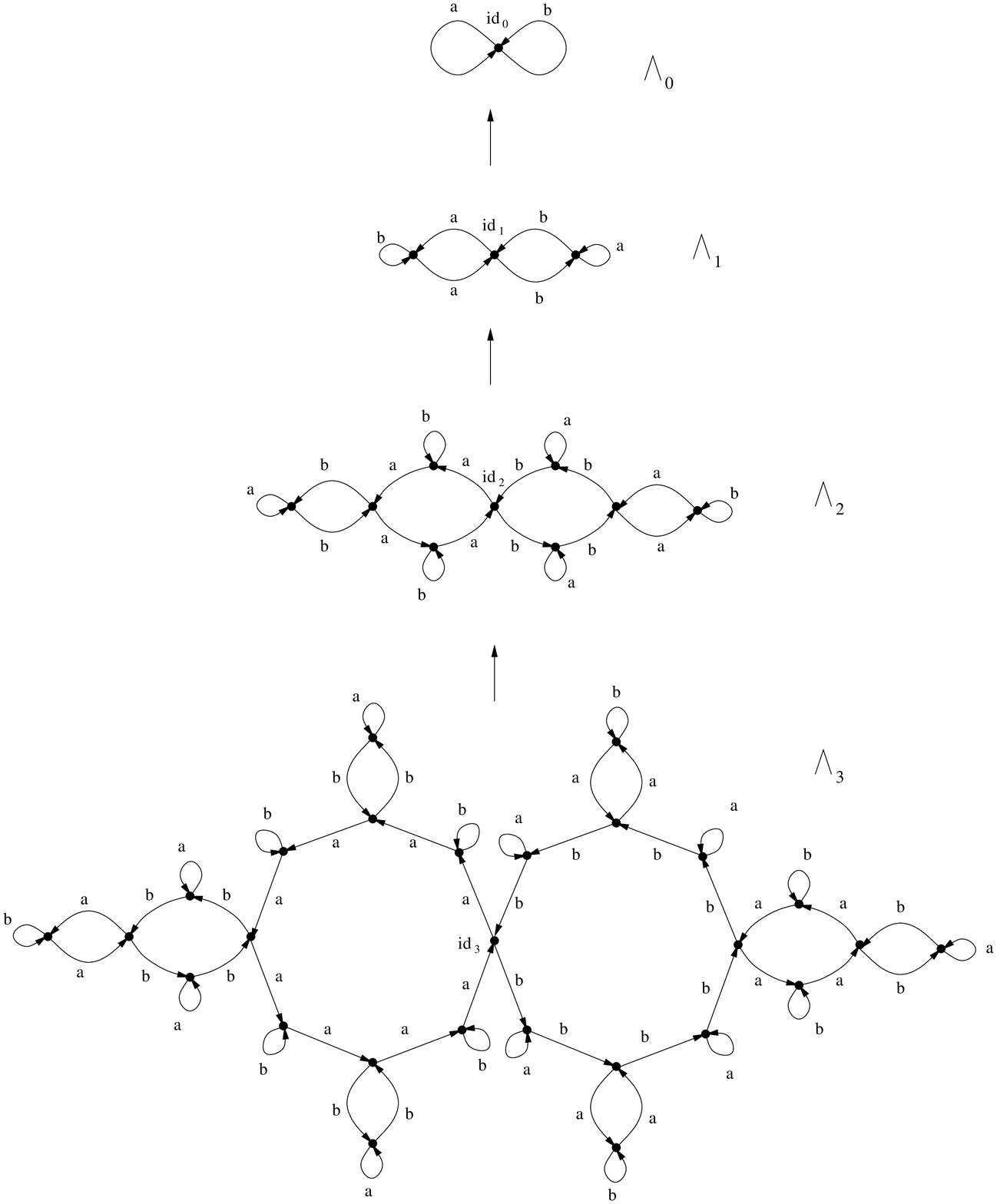}
\caption{Graphs $\Lambda_i$ for $i=0,1,2,3$.}
 \label{fig:SchreierSeq-beginning}
\end{figure}

For $k \geq 1$ define
  \begin{align} \nonumber S_{kab} & = S_{(k-1)ab} \cup a^{2^{k-1}} S_{(k-1)ab} a^{-2^{k-1}} \cup a^{2^{k-1}} S_{(k-1)b} a^{-2^{k-1}}, \\
                \nonumber  S_{kba} & = S_{(k-1)ba} \cup b^{2^{k-1}} S_{(k-1)ba} b^{-2^{k-1}} \cup b^{2^{k-1}} S_{(k-1)a} b^{-2^{k-1}}, \\
                 \label{eq:recursiveformula} S_{ka} & = \{ a^{2^k}, \alpha\} \cup S_{k a b},  \\
                \nonumber  S_{kb} & = \{ b^{2^k}, \beta\} \cup S_{k b a}.
 \end{align}
Then for $k\geq 0$ we have
  \begin{align*} G_k & = \langle a^{2^k}, \alpha, b^{2^k}, \beta, S_{kab}, S_{kba} \ | \ {\rm{rel}}_k=\id, {\rm{rel}}_0=\id \rangle, \end{align*}
where ${\rm{rel}}_k$ is the corresponding relation for the $m_k$-genus surface. The subgroup $G_k$ has index $3^k$ in $G_0$.
\end{subequations}

Let $\bar{S}_{0a}$ and $\bar{S}_{0b}$ be sets of formal inverses of elements in $\bar{S}_{0a}$ and $\bar{S}_{0b}$ respectively, and denote $S = S_{0a} \cup S_{0b} \cup \bar{S}_{0a} \cup \bar{S}_{0b}$. For each $k \geq 0$ let $\Lambda_k'$ be the Schreier diagram of the coset space $G_0/G_k$ constructed with respect to $S$ as in Section \ref{sec:schreiercontsolenoids}, and $\Lambda' = \lim\{\Lambda_k',f^{k+1}_k,\mb{N}_0\}$ be the corresponding Schreier continuum. Let $v = (v_k)$ be a vertex of $\Lambda'$, and denote by $L_v$ its path-connected component.

\subsubsection{The simplified Schreier continuum}

We call an edge labelled by $\lambda$ a $\lambda$-edge and write $e_\lambda$. Then $s(e_\lambda)$ is the starting vertex of the edge, and $t(e_\lambda)$ is the ending vertex. If $s(e_\Lambda) = t(e_\lambda)=v$, $e_\lambda$  is called a $\lambda$-loop based at $v$. If it is important to keep track the starting
point of an edge, we write $e_\lambda^v$. A path in $\Lambda'$ is a continuous map $\gamma:[0,1] \to \Lambda'$, so its image is contained in a path-connected component of $\Lambda'$. We allow paths to start and end in the interior of edges. A subgraph $L'$ of $\Lambda'$ is called path-connected, if for any $v,w \in L'$ there is an edge-path $\gamma \subset L'$ which starts at $v$ and ends at $w$.

We notice that any vertex $w=(w_k) \in \Lambda'$ is a base point of $\alpha$-, $\alpha^{-1}$-, $\beta$- and $\beta^{-1}$-loops, since by construction any $\alpha$-, $\alpha^{-1}$-, $\beta$- and $\beta^{-1}$-loop based at $w_k \in \Lambda_k'$, $k \in \mb{N}_0$ lifts to a loop in $\Lambda_{k+1}'$ at $w_{k+1}$ labelled by the same letter. Given an exhaustion of a path-connected component $L'$ of $w$ by compact sets, eventually every such loop would be contained in a compact sets and will not make any contribution to the number of ends of $L$. This motivates the following lemma.

\begin{Lemma}\label{Lemma:simplifiedSchreier}
Let $S' = \{\alpha,\alpha^{-1},\beta,\beta^{-1}\}$, for $k \in \mb{N}_0$ let
  \begin{align*}\Lambda_k & = \Lambda_k' \backslash \bigcup_{w_k \in V(\Lambda_k')} \Big( \bigcup_{\lambda \in S'}{\rm{int}} \ e_\lambda^{w_k} \Big), \end{align*}
and $r_k: \Lambda_k' \to \Lambda_k$ be a retraction which maps each $\lambda$-loop $e^{w_n}_\lambda$, where $\lambda \in S'$, to its base point $w_n$ and is the identity on the rest of $\Lambda_k'$.  Denote by $i_k:\Lambda_k \to \Lambda_k'$ the inclusion, and define a continuous map $\nu^k_{k-1}:\Lambda_k \to \Lambda_{k-1}$ by
   \begin{align}\label{eq:numappings} \nu^k_{k-1}(x) & = r_{k-1} \circ {\nu'}^k_{k-1} \circ i_k(x). \end{align}
Then $\{\Lambda_k,\nu^k_{k-1},\mb{N}_0\}$ is an inverse sequence of topological spaces, and there is an induced map
  \begin{align}\label{eq:retrmap} r_\infty&: \Lambda' \to \Lambda = \lim_{\longleftarrow} \{ \Lambda_k,\nu^{k+1}_k, \mb{N}_0\}, \end{align}
which is a bijection between the sets of path-connected components of $\Lambda'$ and $\Lambda$. Moreover, for any vertex $v \in \Lambda'$ the path-connected components $L_v'$ and $r(L_v')$ have the same number of ends.
\end{Lemma}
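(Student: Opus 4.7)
The plan is to verify three facts in order. First, that the retractions $(r_k)_{k\in\mathbb{N}_0}$ commute, in the appropriate sense, with the covering bonding maps ${\nu'}^k_{k-1}$, so that $\{\Lambda_k,\nu^k_{k-1}\}$ is an inverse sequence and (\ref{eq:retrmap}) yields a well-defined continuous $r_\infty:\Lambda'\to\Lambda$ that is a left inverse of the natural inclusion $i_\infty:\Lambda\hookrightarrow\Lambda'$. Second, that $r_\infty$ induces a bijection on path components. Third, that the restriction $r_\infty|_{L_v'}$ is a proper map with connected compact fibres, so that it induces a bijection on spaces of ends.

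For the first step, the key observation is that for every $\lambda\in S'$ and every vertex $w_k\in\Lambda_k'$ the edge $e_\lambda^{w_k}$ is genuinely a self-loop: by the inductive formula (\ref{eq:recursiveformula}) the elements $\alpha$ and $\beta$ lie in each $G_k$, so they fix every coset $w_kG_k$. Consequently $\Lambda_k$ retains the full vertex set of $\Lambda_k'$, each $r_k$ is a continuous retraction, and the formula $\nu^k_{k-1}=r_{k-1}\circ{\nu'}^k_{k-1}\circ i_k$ defines a continuous map $\Lambda_k\to\Lambda_{k-1}$. A direct check, using the label-preserving property of ${\nu'}^k_{k-1}$, gives $r_{k-1}\circ{\nu'}^k_{k-1}=\nu^k_{k-1}\circ r_k$ as maps $\Lambda_k'\to\Lambda_{k-1}$, so $(r_k)$ is a morphism of inverse systems and the induced $r_\infty$ exists and satisfies $r_\infty\circ i_\infty=\id_\Lambda$.

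For the second step, the relation $r_\infty\circ i_\infty=\id_\Lambda$ handles one composition. For the other, I would check that $r_\infty(L_v')\subset L_v'$: any point of $L_v'$ is either fixed by $r_\infty$ or lies in the interior of a loop $e_\lambda^w\subset L_v'$, in which case $r_\infty$ sends it to $w\in L_v'$. Combined, this forces $[L']\mapsto[r_\infty(L')]$ and $[L]\mapsto[i_\infty(L)]$ to be mutually inverse bijections between path components. For the third step, in the leaf topology $L_v'$ is obtained from $L_v:=r_\infty(L_v')$ by wedging at each vertex $w$ of $L_v$ the finite bouquet of circles $\{e_\lambda^w:\lambda\in S'\}$, and $r_\infty|_{L_v'}$ collapses each bouquet to its base point. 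Properness follows because the preimage of any finite subgraph $K\subset L_v$ is $K$ together with finitely many circles, hence compact; the fibres are single points on non-loop edge interiors and connected bouquets of circles at vertices. Given a compact exhaustion $K_1\subset K_2\subset\cdots$ of $L_v$, the preimages $K_i':=r_\infty^{-1}(K_i)\cap L_v'$ form a compact exhaustion of $L_v'$, and the connected components of $L_v'\setminus K_i'$ are obtained from those of $L_v\setminus K_i$ by re-attaching the bounded bouquets; this gives a bijection on unbounded components compatible with the inclusions, and hence an equality of ends.

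The main obstacle will be the third step: making the bijection of ends rigorous demands precision about the graph structure of the path components in the leaf topology and a careful accounting of how the loop bouquets distribute among the complementary components of a compact exhaustion. By contrast, the inverse-system compatibility in the first step and the bijection on path components in the second are essentially formal once the edges labelled by $S'$ are correctly identified as self-loops at every vertex.
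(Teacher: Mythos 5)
Your proof follows essentially the same route as the paper's: show that the $r_k$ form a morphism of inverse systems so that $r_\infty$ exists with $r_\infty\circ i_\infty=\id_\Lambda$, deduce the bijection on path components, and then match ends by arranging that each compact set in an exhaustion swallows the $S'$-loops whole. The only real difference is in the packaging of the last step: the paper pushes forward an exhaustion of $L_v'$ by metric balls $B'(v,z+\frac{1}{2})$, the radius chosen so that each ball either contains a loop $e_\lambda^w$ entirely or misses it, whereas you pull back an exhaustion of $L_v$ and observe that $r_\infty|_{L_v'}$ is proper with connected compact fibres; both reduce to the same observation that collapsing the bounded bouquets does not alter the unbounded complementary components, and your version is, if anything, slightly cleaner since it is an instance of the general fact that a proper surjection with compact connected fibres induces a bijection of end spaces. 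One caveat: your justification that every $S'$-edge is a self-loop --- ``$\alpha,\beta\in G_k$, so they fix every coset'' --- is not valid as stated. Membership in $G_k$ only yields a loop at the coset of the identity; a loop at the coset of $w$ requires the conjugate $w\alpha w^{-1}$ (resp.\ $w\beta w^{-1}$) to lie in $G_k$, i.e.\ that $\alpha$ and $\beta$ lie in the normal core of $G_k$. This is true here, but the correct reason is the one given in the paragraph preceding the lemma: every $\alpha$- or $\beta$-loop in $\Lambda_k'$ lifts to a loop in $\Lambda_{k+1}'$ labelled by the same letter (since the curves $\alpha,\beta$ avoid the cut curves of the Schori construction), and induction from the one-vertex graph $\Lambda_0'$ does the rest. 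Since the lemma's statement already presupposes this fact in defining $r_k$, the slip does not damage the argument, but the justification should be replaced.
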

\begin{proof} The mappings \eqref{eq:numappings} satisfy $\nu^k_i = \nu^j_i \circ \nu^k_j$ for any $i < j < k$, and $\Lambda_k$ are compact, so the inverse limit $\Lambda$ exists and is non-empty \cite{RZ}. The retractions $r_k$ satisfy $r_k \circ {\nu'}^{k+1}_k = \nu^{k+1}_k \circ r_{k+1}$ and are surjections onto compact spaces $\Lambda_k$, so they induce a continuous surjection \eqref{eq:retrmap} on the inverse limit spaces. On the other hand, the inclusions $i_k$ induce a continuous mapping $i_\infty:\Lambda \to \Lambda'$ such that $r_\infty \circ i_\infty = \id_\Lambda$. It follows that \eqref{eq:retrmap} is a bijection between path-connected components of $\Lambda'$ and $\Lambda$, and $r_\infty(L_v') = L_v$.

Let $d'$ be a complete length metric on the leaves of $\Lambda'$ such that the restrictions of $\nu_k'$ to leaves is a local isometry. In particular, each edge in $\Lambda'$ has length $1$ in the length structure associated to the metric. Denote by $d$ the induced length metric on the subspace $\Lambda$ of $\Lambda'$. Denote by $B'(w,s)$ and $B(w,s)$ compact balls of radius $s$ about a vertex $w$ with respect to $d'$ and $d$ respectively, and consider the exhaustion $\{B'(v, z + \frac{1}{2}) \}_{z \in \mb{N}}$ of $L_v'$ by closed balls.

Notice that if $\lambda \in S'$ and $e_\lambda^w$ is a loop, then for any $x \in {\rm{int}} \ e^w_\lambda$ we have $d ' (w,x) \leq \frac{1}{2}$. Therefore, for any $z \in \mb{N}$ the intersection $B'(v, z + \frac{1}{2}) \cap e^w_\lambda$ is either empty or contains the edge $e^w_\lambda$. Therefore, there is a bijection between the sets of path-connected components of the complements $L_v' \backslash B'(v, z + \frac{1}{2})$ and $L_v \backslash r_\infty(B'(v, z + \frac{1}{2}))$. The sequence
  \begin{align*} \Big\{r_\infty(B'(v, z + \frac{1}{2}) \Big\}_{z \in {\tiny \mb{N}}} & = \Big\{ B(v, z + \frac{1}{2}) \Big\}_{z \in {\tiny \mb{N}}} \end{align*}
is an exhaustion of $L_v$, and it follows that $L_v'$ and $L_v$ have the same number of ends. 
\end{proof}

In the case of the Schori example $\Lambda_0$ is a figure $8$, and each $\Lambda_k$ is the Schreier graph of a coset space $F_2/G_k'$, where $F_2$ is a free group on two generators and $G_k' \subset F_2$ is a subgroup of finite index (see Figure \ref{fig:SchreierSeq-beginning}) with the following set of generators: denote by $\bar{S}_{0a} = \{a\}$, $\bar{S}_{0b} = \{b\}$, for $k>1$ define $S_{kab}$ and $S_{kba}$ as in \eqref{eq:recursiveformula}, and
  \begin{align*} S_{ka} & = \{ a^{2^k}\} \cup S_{kab}, & S_{kb} = \{ b^{2^k} \} \cup S_{kba}. \end{align*}
Then
  \begin{align*} G_k' & = \langle a^{2^k}, b^{2^k}, S_{kab}, S_{kba} \rangle. \end{align*}

\subsubsection{Ends of special path-connected components in the Schori example}\label{subsubsec:leavesSchori}

As in Lemma \ref{Lemma:simplifiedSchreier}, we consider the sequence of topological spaces $\Lambda_k$, $k \in \mb{N}_0$, with length structure induced from $\Lambda_k'$ and the corresponding length metric $d_k$. Since $\Lambda_k$ is compact, $d_k$ is bounded. In fact, we can obtain a more precise estimate, which will be of use later.

For $k \in \mb{N}_0$ we denote by $\id_k$ the coset $[\id] \in F_2/G_k'$, and by $a^m$ (resp. $b^m$), $m = 1,\ldots,2^k-1$, the coset $[a^m]$ (resp. $[b^m]$). Denote by $\mc{A}_k$ the connected component of $\Lambda_k - \{\id_k\}$, containing vertices $a^1, \ldots, a^{2^k-1}$, and by $\mc{B}_k$ the other connected component. Then $\mc{A}_k \cup \{\id_k\}$ and $\mc{B}_k \cup \{\id_k\}$ are compact connected subspaces of $\Lambda_k$.

\begin{Lemma}\label{Lemma:metriconGamma}
The restriction of the length metric $d_k$ to $\mc{A}_k \cup \{\id_k\}$ or to $\mc{B}_k \cup \{\id_k\}$ has the least upper bound $N_k= 2^k - \frac{1}{2}$, and for any two positive integers $m,\ell  < 2^k$
  \begin{align*} d_k(a^\ell,a^m)& = \min \{|m-\ell|, 2^k - |m-\ell| \}, \end{align*}
and the same holds for $d_k(b^\ell,b^m)$.
\end{Lemma}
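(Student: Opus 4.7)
The plan is to prove the lemma by induction on $k$, using the self-similar structure of the Schori cover. For the base case $k = 0$, the graph $\Lambda_0$ is the figure eight with a single vertex $\id_0$ and two loops labelled $a$ and $b$, so $\mc{A}_0 \cup \{\id_0\}$ is a circle of circumference one with diameter $1/2 = 2^0 - 1/2$. The cyclic distance claim is vacuous since there are no positive integers strictly less than $2^0 = 1$.

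For the inductive step, I would analyse $\Lambda_{k+1}$ through the $3$-to-$1$ cover $\Lambda_{k+1} \to \Lambda_k$ induced by the Schori construction. By self-similarity, the generator $a^{2^k}$, dual to $C_k$, has monodromy the transposition $(1, 2)$ on the three level-$(k+1)$ copies, while $b^{2^k}$ acts as $(1, 3)$ and all other generators of $G_k'$ act trivially. The effect on $\Lambda_k$ is to fuse the length-$2^k$ $a$-cycle of copies $2$ and $3$ into a single cycle of length $2^{k+1}$, while copy $1$ retains its own length-$2^k$ $a$-cycle; symmetrically, copies $1$ and $2$'s $b$-cycles fuse while copy $3$'s remains closed. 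The basepoint $\id_{k+1}$ lies at the basepoint of copy $2$, the unique copy merged on both sides, and the vertices of the merged $a$-cycle can be labelled cyclically $a^0 = \id_{k+1}, a^1, \dots, a^{2^{k+1}-1}$ so that $a^0, \dots, a^{2^k-1}$ sit in copy $2$ and $a^{2^k}, \dots, a^{2^{k+1}-1}$ in copy $3$, with $a^{2^k}$ equal to the basepoint of copy $3$.

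The cyclic distance formula follows by describing the branches attached to each $a$-cycle vertex $a^i$ off the cycle: for $0 < i < 2^k$ (respectively $2^k < i < 2^{k+1}$) the branch is inherited from the level-$k$ structure inside copy $2$ (respectively copy $3$) at position $i$ (respectively $i - 2^k$), and by the inductive hypothesis its diameter is at most $N_{k-1}$. The exceptional branch at $a^{2^k}$ is the entire $b$-cycle of copy $3$ together with all its further branches, which forms a subspace isomorphic to $\mc{B}_k \cup \{\id_k\}$ of diameter $N_k$ by the inductive hypothesis. Since all branches meet the $a$-cycle only at their single attachment vertex, any detour off the cycle between $a^\ell$ and $a^m$ contributes nonnegatively to the path length, proving $d_{k+1}(a^\ell, a^m) = \min(|m-\ell|, 2^{k+1} - |m-\ell|)$. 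The diameter claim then follows from a case analysis of pair distances: the maximum is attained between $\id_{k+1}$ and the deepest point of the branch at $a^{2^k}$, yielding $d_{k+1}(\id_{k+1}, a^{2^k}) + N_k = 2^k + (2^k - 1/2) = 2^{k+1} - 1/2 = N_{k+1}$, while all other pair distances are strictly smaller since branches away from $a^{2^k}$ have depth at most $N_{k-1} < N_k$.

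The principal obstacle is rigorously establishing that branches attached at distinct $a$-cycle vertices are disjoint off the cycle, since the Schori regluing could a priori create chord-like identifications. The key observation is that the recursive formulas for $S_{kab}$ and $S_{kba}$ correspond precisely to translation of the branch structure along the $a$-cycle by multiples of $2^{k-1}$ via conjugation by powers of $a$ and $b$; consequently, disjoint branches at level $k$ remain disjoint upon lifting to $\Lambda_{k+1}$. Once this structural claim is established, the additive bookkeeping for the metric follows directly from the inductive hypothesis.
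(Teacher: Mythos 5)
The paper states this lemma without any proof, so there is no argument of record to compare against; your induction on the cut-and-reglue structure of the covers $\Lambda_{k+1}\to\Lambda_k$ is exactly the argument the authors leave implicit, and in substance it is correct: the big $a$-cycle of length $2^{k+1}$ is isometrically embedded because every branch (and the whole $b$-side) meets it in a single vertex, and the diameter is realized from $\id_{k+1}$ into the copy of $\mc{B}_k\cup\{\id_k\}$ hanging off $a^{2^k}$.

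Three points need tightening. First, your permutation labels are inconsistent with your own subsequent use of them: from the identifications in the Schori construction (${C^1}'$ with ${C^1}''$, ${C^2}'$ with ${C^3}''$, ${C^3}'$ with ${C^2}''$, and the analogous pattern for the $D$'s) the loop $a^{2^k}$ acts on the sheets by the transposition $(2\,3)$ and $b^{2^k}$ by $(1\,2)$, so copy $2$ is the one moved by both --- which is what you actually use (fusing the $a$-cycles of copies $2,3$ and the $b$-cycles of copies $1,2$, basepoint in copy $2$), but contradicts the $(1,2)$ and $(1,3)$ you wrote. Second, and more substantively, the inductive hypothesis as stated (the value of the diameter plus the cyclic distance formula) is too weak to run your own inductive step: you invoke that the branch at $a^{2^k}$ is an isometric copy of $\mc{B}_k\cup\{\id_k\}$ attached at its basepoint, that the \emph{eccentricity of $\id_k$} in that set equals the diameter $N_k$ (you need this for the lower bound $N_{k+1}\leq \mathrm{diam}$), and that all other branches of $\mc{A}_k$ have depth at most $N_{k-1}$. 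None of these follows from the stated hypothesis; you must strengthen the induction to carry the full structural description of $\mc{A}_k\cup\{\id_k\}$ as an isometrically embedded $2^k$-cycle with branches attached at single vertices, the deepest one at $a^{2^{k-1}}$ being a copy of $\mc{B}_{k-1}\cup\{\id_{k-1}\}$. Your inductive step does establish all of this, so the repair is organizational rather than mathematical. Third, ``all other pair distances are strictly smaller'' must also cover pairs of points lying in two \emph{different} branches; the worst case is depth $N_{k-1}$ at $a^{2^{k-1}}$ against depth $N_k$ at $a^{2^k}$, giving $N_{k-1}+2^{k-1}+N_k=2^{k+1}-1<N_{k+1}$, so the conclusion survives but not for the reason you give. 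Finally, the ``chord-like identification'' worry is best dispatched geometrically rather than through the recursion for $S_{kab}$: the curves $C_k$ and $D_k$ meet $\Lambda_k$ only in one interior point of an $a$-edge and one interior point of a $b$-edge, so each copy's branches are carried homeomorphically into $\Lambda_{k+1}$ and stay attached at single vertices.
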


\begin{Cor}
For any positive integers $m,\ell < 2^{k-1}$
   \begin{align*} d_k(a^\ell,a^m)& = |m-\ell|. \end{align*}
\end{Cor}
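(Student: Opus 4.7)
The plan is to deduce the corollary directly from Lemma \ref{Lemma:metriconGamma}, which gives the formula
\[
d_k(a^\ell,a^m) = \min\{|m-\ell|,\, 2^k - |m-\ell|\}
\]
for all positive integers $m,\ell < 2^k$. Since the hypothesis of the corollary is stronger (requiring $m,\ell < 2^{k-1}$), the task reduces to showing that under this stronger assumption the first term in the minimum is always the smaller one.

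First I would note that for $m,\ell$ positive integers less than $2^{k-1}$ we have $0 \le |m-\ell| < 2^{k-1}$. Consequently,
\[
2^k - |m-\ell| > 2^k - 2^{k-1} = 2^{k-1} > |m-\ell|,
\]
so that $\min\{|m-\ell|,\, 2^k-|m-\ell|\} = |m-\ell|$. Substituting into the formula from Lemma \ref{Lemma:metriconGamma} yields $d_k(a^\ell,a^m) = |m-\ell|$, which is exactly the claim.

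Since the argument is a single elementary inequality on top of the already-proved lemma, there is no real obstacle; the only thing to be careful about is that the lemma is stated for positive integers strictly less than $2^k$, and the corollary's hypothesis $m,\ell < 2^{k-1}$ automatically guarantees this, so the lemma applies verbatim. The same reasoning applies to the $b$-labelled vertices by the symmetric statement in the lemma, though the corollary only records the $a$-case.
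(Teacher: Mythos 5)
Your argument is correct and is exactly the intended deduction: the paper states this corollary without proof as an immediate consequence of Lemma \ref{Lemma:metriconGamma}, and the elementary inequality $2^k - |m-\ell| > 2^{k-1} > |m-\ell|$ under the hypothesis $m,\ell < 2^{k-1}$ is the whole content. Nothing is missing.
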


Denote by $\id$ the point $(\id_k) \in \Lambda$.

\begin{Cor}\label{Cor:corpaths}
If $m,\ell$ are positive integers, then for $(a^m),(a^\ell) \in L_\id$
    \begin{align*} d\big((a^m),(a^\ell)\big) & = |m - \ell|, \end{align*}
and there is a corresponding geodesic in $L_\id$ consisting only of $a$- or only of $a^{-1}$-edges.
\end{Cor}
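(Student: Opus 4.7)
The plan is to prove the equality by establishing matching upper and lower bounds, with the upper bound given by an explicit path of $a$-edges (or $a^{-1}$-edges, depending on which of $m,\ell$ is larger).

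For the upper bound, I would construct the geodesic explicitly. Assume without loss of generality that $m > \ell$. At each vertex $(a^j) \in \Lambda$, an $a$-edge leaves $(a^j)$ and ends at $(a^{j+1})$; this is well defined in the inverse limit because an $a$-edge from $a^j$ to $a^{j+1}$ exists in every $\Lambda_k$ (for $k$ large enough that $j, j+1 < 2^k$) and these edges are compatible under the bonding maps $\nu^k_{k-1}$, so they assemble into a single edge of length $1$ in $\Lambda$. Concatenating the $m-\ell$ edges from $(a^\ell)$ through $(a^{\ell+1}), (a^{\ell+2}), \dots$ up to $(a^m)$ produces a path in $\Lambda$ of length exactly $|m-\ell|$, entirely made of $a$-edges. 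Because this path is connected to $\id$ through the chain of $a$-edges emanating from $(\id)$, it lies in the leaf $L_\id$. Hence $d((a^m),(a^\ell)) \leq |m-\ell|$.

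For the lower bound, I would use the fact that the length metric $d$ is defined so that each edge of $\Lambda$ has length $1$ and so that the bonding projections $\pi_k:\Lambda \to \Lambda_k$ send edges isometrically to edges. In particular $\pi_k$ is $1$-Lipschitz on leaves, and any path in $L_\id$ from $(a^\ell)$ to $(a^m)$ projects to a path in $\Lambda_k$ from $a^\ell$ to $a^m$ of no greater length. Choose $k$ large enough that $m,\ell < 2^{k-1}$; by the preceding corollary $d_k(a^\ell,a^m) = |m-\ell|$, so any such path has length at least $|m-\ell|$. Taking the infimum gives $d((a^m),(a^\ell)) \geq |m-\ell|$.

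The main obstacle is essentially bookkeeping rather than a deep difficulty: one must verify that the projections are genuinely length-nonincreasing on leaves (which follows directly from the construction of the simplified graphs $\Lambda_k$ and the maps $\nu^k_{k-1}$ in Lemma \ref{Lemma:simplifiedSchreier}) and that the concatenation of $a$-edges is a bona fide path in the inverse limit rather than only a formal sequence of edges. The latter is what makes the choice of $a$-edges (as opposed to other generators) important: $a$-edges persist consistently across every level of the inverse system, so lifting through the $\nu^k_{k-1}$ presents no obstruction.
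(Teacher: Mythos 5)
Your proof is correct and follows the same route the paper intends: the paper leaves this corollary unproved, deriving it from Lemma \ref{Lemma:metriconGamma} and its corollary exactly as you do, with the explicit chain of $a$-edges giving the upper bound and the length-preserving projections $\nu_k$ onto $\Lambda_k$ (the same device used in the proof of Proposition \ref{Prop:pathcomponentofid}) giving the lower bound.
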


In the following proposition we will also use a notation $a^{-m}$ for a coset $[a^{-m}] \in F_2/G_k'$, where $m \in \mb{N}_0$, so that vertices in $F_2/G_k'$ now have two names since $[a^\ell] = [a^{-m}]$ with possibly  $\ell \ne m$. We will see that $(a^{-m}) \ne (a^\ell)$ in $L_\id$ for any $m,\ell >0$ in the proof of Proposition \ref{Prop:pathcomponentofid}.

\begin{Prop}\label{Prop:pathcomponentofid}
The pathwise connected component $L_\id$ of $\Lambda$ has four ends.
\end{Prop}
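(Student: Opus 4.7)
My plan is to verify $L_\id$ has exactly four ends by exhibiting four distinct ends as the four coordinate rays from $\id$ and then ruling out any further ones.

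First, the four sequences $\{(a^{\pm m})\}_{m>0}$ and $\{(b^{\pm m})\}_{m>0}$ in $L_\id$ give four pairwise disjoint geodesic rays from $\id$. Corollary~\ref{Cor:corpaths} handles the positive $a$-direction and the other three are symmetric. The distinctness claim $(a^{-m}) \neq (a^\ell)$ for $m,\ell>0$ reduces to $\bigcap_k G_k' \cap \langle a\rangle = \{e\}$, which follows from Lemma~\ref{Lemma:metriconGamma}: the main $a$-cycle in $\Lambda_k$ has length exactly $2^k$, so $G_k'\cap\langle a\rangle = \langle a^{2^k}\rangle$, and the same argument works for $b$.

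Next I would separate these four rays into four distinct ends. Fix $R>0$; suppose for contradiction that $\gamma\subset L_\id\setminus B(\id,R)$ is a path of length $L$ joining $(a^{R+1})$ to $(b^{R+1})$. Its image lies in $B(\id, R+L+1)$, and I would argue that for $k$ sufficiently large the projection $\pi_k\colon L_\id\to\Lambda_k$ is isometric on this ball: any non-isometry would produce a non-trivial element of $G_k'\setminus\bigcap_j G_j'$ of word length bounded in terms of $R$ and $L$, and since only finitely many such elements have bounded word length and each is eventually excluded from $G_k'$, the isometry holds for $k$ large. For such $k$, $\pi_k\circ\gamma$ is a path in $\Lambda_k\setminus B(\id_k,R)$ of length $L$ from $[a^{R+1}]_k$ to $[b^{R+1}]_k$. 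By Lemma~\ref{Lemma:metriconGamma}, within radius $2^{k-1}$ of $\id_k$ the main $a$- and $b$-cycles are isometrically embedded and meet only at $\id_k$, so any such path must detour through the antipodal region of one main cycle, requiring length at least $2(2^k-R-1)$. Taking $k$ large enough that $2(2^k-R-1)>L$ yields the contradiction. The separation of $(a^{R+1})$ from $(a^{-(R+1)})$ is analogous, using the antipodal detour on the main $a$-cycle of length at least $2^k-2R-2$.

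Finally, I would show there are no further ends by establishing a uniform bound $D$ such that every vertex of $L_\id$ lies within distance $D$ of the cross $\ell_a\cup\ell_b$ formed by the two bi-infinite lines through $\id$. Given such $D$, for any $R$ the complement $L_\id\setminus B(\id,R+D)$ has at most four unbounded components, each containing a tail of one of the four rays; together with the previous step this yields exactly four ends. The hardest part of the proof is establishing this uniform bound, which requires an inductive analysis of the Schori covering construction, tracking how the off-cross vertices of $\Lambda_k$ are attached to the main cross through $\id_k$ by paths whose lengths are bounded independently of $k$.
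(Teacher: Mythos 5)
Your first two steps (exhibiting the four rays and separating them) are essentially sound and close in spirit to the paper's argument: the paper also separates $(a^m)$ from $(a^{-n})$ by projecting a hypothetical connecting path to $\Lambda_i$ and bounding its length below by $2^i-(m+n)$, and it separates the $a$-rays from the $b$-rays even more cheaply than you do, by observing that $\id_j$ is a cut vertex of $\Lambda_j$ separating $\mc{A}_j$ from $\mc{B}_j$, so the projected path cannot exist at all (no ``antipodal detour'' is needed there). The genuine gap is in your final step. The uniform bound $D$ you propose --- that every vertex of $L_\id$ lies within distance $D$ of the cross $\ell_a\cup\ell_b$ --- is false, and no amount of inductive analysis of the Schori construction will establish it. Indeed, every vertex of $\Lambda_k$ lies within distance $2^k-1$ of $\id_k$ (Lemma \ref{Lemma:metriconGamma}), so lifting geodesics shows that the ball $B(\id,2^k)$ in $L_\id$ contains at least $3^k$ vertices; since $L_\id$ has degree $4$, a graph lying within Hausdorff distance $D$ of a union of two bi-infinite geodesics would have at most $C(D)\cdot R$ vertices in a ball of radius $R$. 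The growth exponent $\log 3/\log 2>1$ rules this out: the off-cross ``decorations'' of $\Lambda_k$ (for instance the $b$-cycle of length $2^k$ attached at the vertex $a^{2^k}$, coming from $a^{2^k}b^{2^k}a^{-2^k}\in\mc{K}_{\id}$) have depth growing with $k$.

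The correct route to ``at most four ends,'' which is where the paper's proof does real work, is to abandon the quasi-isometry-to-a-cross picture and instead choose the exhausting compacta cleverly: take $K_k=B(\id,N_k)$ with $N_k=2^k-\tfrac12$, which is precisely a fundamental domain of the covering $\nu_k\colon L_\id\to\Lambda_k$. For $i>k$ the coboundary of the vertex set of $B_i(\id_i,N_k)$ in $\Lambda_i$ consists of exactly four edges (the two $a$-edges and two $b$-edges where the main cycles leave the lifted fundamental domain), so any path from $\id$ to a point of $L_\id\setminus K_k$ projects to a path that must cross one of these four edges; hence the complement has at most four components, each meeting one of the four rays. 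The count of exits from a well-chosen fundamental domain, not a bounded-neighbourhood statement about the cross, is the missing idea.
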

\begin{proof} The idea is to choose a cofinal sequence of compact subsets in $L_\id$ so that their complements are easy to handle. Such a sequence is a sequence of compact balls $B(\id,N_k)$ where $N_k = 2^k - \frac{1}{2}$ (see Lemma \ref{Lemma:metriconGamma}).

Let $A \subset V(L_\id)$ be a subset of the set of vertices of $L_\id$. We say that a subgraph $\Gamma_A$ is associated to $A$, if $\Gamma_A$ contains $A$ and if $\Gamma_A$ contains all edges $e_\lambda \in L_\id$ with both starting and ending vertices in $A$. We claim that for each $k \in \mb{N}_0$ and for each of the following subsets of $V(L_\id)$
  \begin{align} \label{eq:connectedsets} A_{k}^+ & = \{(a^m) \ | \  m > N_k\}, & A_{k}^- & = \{(a^{-m}) \ | \ m > N_k \}, \\
          \nonumber   B_{k}^+ & =  \{(b^m) \ | \ m > N_k \}, & B_{k}^-  & = \{(b^{-m}) \ | \ m > N_k \} \end{align}
the associated graph lies in a distinct unbounded connected component of the complement $L_\id \backslash B(\id,N_k)$, and each $L_\id \backslash B(\id,N_k)$ has at most four unbounded connected components.

Clearly $\Gamma_{A_k^+}$ is path-connected. Since $B(\id,N_k)$ contains an interior point of an edge $e_\lambda$ if and only if it contains at least one of its vertices,
  $$\Gamma_{A_k^+} \cap B(\id,N_k)  = \emptyset$$
and so $\Gamma_{A_k^+}$ is contained in a path-connected component of $L_\id \backslash B(\id,N_k)$. By Corollary \ref{Cor:corpaths} geodesics between vertices in $A_k^+$ are contained in $\Gamma_{A_k^+}$ and they can be of arbitrarily large length, so $\Gamma_{A_k^+}$ is contained in an unbounded path-connected component of the complement. By a similar argument each of the sets $A^-_k$, $B^+_k$, $B^-_k$ is contained in an unbounded connected component of the complement $L_\id \backslash B(\id,N_k)$.

We show that $\Gamma_{A_{k}^+}$ and $\Gamma_{A_{k}^-}$ lie in
different path-connected components of $L_\id \backslash
B(\id,N_k)$. Assume to the contrary that there is a path $\gamma$
between $(a^m)$ and $(a^{-n})$ entirely contained in $L_\id
\backslash B(\id,N_k)$. Choose $j \in \mb{N}_0$ so that for each $i
\geq j$ the path $\gamma$ is contained in the fundamental domain of
the covering map $\nu_i:L_\id \to \Lambda_i$, then its projection
$\gamma_i = \nu_i \circ \gamma$ is contained in $\Lambda_i
\backslash B_i(\id_i,N_k)$, where $B_i(\id_i,N_k)$ is a closed ball
about $\id_i$ of radius $N_k$ with respect to the length metric
$d_i$ on $\Lambda_i$. Then
  \begin{align*} \ell(\gamma) & \geq \ell_i(\gamma_i) \geq  2^i-(m + n),\end{align*}
where $\ell(\gamma)$ and $\ell_i(\gamma_i)$ are the lengths of paths in the length structures associated to $d$ and $d_i$ respectively. Since $i$ can be arbitrary large, $\ell(\gamma)$ cannot be finite. It follows that $\Gamma_{A_{k}^+}$ and $\Gamma_{A_{k}^-}$ (and, by a similar argument, $\Gamma_{B_{k}^+}$ and $\Gamma_{B_{k}^-}$) are in different path connected components of $L_\id \backslash B(\id,N_k)$.

Next, suppose $\Gamma_{A^+_k}$ and $\Gamma_{B^+_k}$ (resp. $\Gamma_{B^-_k}$) are in the same path-connected component of $L_\id \backslash B(\id,N_k)$ and choose $j \in \mb{N}$ so that a path $\gamma$ between $(a^m) \in A^+_k$ and $b^n \in B^+_k$ (resp. $b^{-n} \in B^-_k$) is contained in the fundamental domain of the covering map $\nu_j:L_\id \to \Lambda_j$. Then $\gamma_j = \nu_j \circ \gamma$ is contained in a path-connected component of $\Lambda_j \backslash B_j(\id_j,N_k)$, which implies that either $a^m,b^n \in \mc{A}_j$ or $a^m,b^n \in \mc{B}_j$ (resp. $a^m,b^{-n} \in \mc{A}_j$ or $a^m,b^{-n} \in \mc{B}_j$), a contradiction. Therefore, $\Gamma_{A^+_k}$ and $\Gamma_{B^+_k}$ (resp. $\Gamma_{B^-_k}$) are in different path-connected components of $L_\id \backslash B(\id,N_k)$. Repeat the same argument for $\Gamma_{A^-_k}$. Conclude that $L_\id \backslash B(\id,N_k)$ has at least four unbounded path-connected components.

We show that $L_\id \backslash B(\id,N_k)$ has at most $4$
path-connected components. Note that each $B(\id,N_k)$ is precisely
the fundamental domain of the covering map $\nu_k: L_\id \to
\Lambda_k$. Then for $i > k$ the coboundary of the set of vertices
$V(B_i(\id_i,N_k))$ consists of exactly $4$ edges. Let $v = (v_n)
\in L_\id \backslash B(\id,N_k)$, and $\gamma$ be a path in $L_\id$
between $\id$ and $v$. Choose an integer $j>k$ such that $i>j$
implies that the path $\gamma$ is contained in the fundamental
domain of the covering map $\nu_i:L_\id \to \Lambda_i$, and so
projects faithfully on $\Lambda_i$. Since $ \id_i = \nu_i(\id) \in
B_i(\id_i,N_k)$ and $v_i \in \Lambda_i \backslash  B_i(\id_i,N_k)$,
the projection $\gamma_i = \nu_i \circ \gamma$ contains an edge of
the coboundary of $V(B_i(\id_i,N_k))$. It follows $v$ is in a
path-connected component containing one of the sets in
\eqref{eq:connectedsets}. 
\end{proof}

Let $q_0 = \id_0$, $q_1 = a$  and for $k \in \mb{N}_0$, $k>1$ define
  \begin{align}\label{eq:pathconnectedoneend} q_{2k} & = b^{2^k} q_{2k-1}, & q_{2k+1} & = a^{2^k+1} q_{2k}. \end{align}
Then $\nu^k_{k-1} (q_k) = q_{k-1}$, and $q=(q_k)$ is a vertex in $\Lambda$.

\begin{Lemma}\label{Lemma:oneendcomponent}
A path-connected component $L_q$ of $\Lambda$ has $1$ end.
\end{Lemma}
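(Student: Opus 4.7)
The plan is to parallel the strategy of Proposition~\ref{Prop:pathcomponentofid}: exhaust $L_q$ by closed balls $B(q,R_k)$ with $R_k \to \infty$ and count the unbounded path-connected components of each complement $L_q \setminus B(q,R_k)$. The decisive contrast with the $\id$-component is that the drift prescribed in~\eqref{eq:pathconnectedoneend} positions $q_k$ in $\Lambda_k$ in such a way that the four candidate rays $a^{\pm}, b^{\pm}$ from $q$ ultimately merge, whereas at $\id$ they remain in four distinct unbounded components.

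First I would take $R_k = N_k = 2^k - \tfrac{1}{2}$, or an adapted variant, so that $B(q,R_k)$ projects under $\nu_k$ onto a subset of $\Lambda_k$ whose coboundary in $L_q$ consists of a controlled number of edges. Repeating the coboundary-counting argument from the last paragraph of the proof of Proposition~\ref{Prop:pathcomponentofid} will then yield the upper bound: each complement $L_q \setminus B(q,R_k)$ has at most four unbounded path-connected components.

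The heart of the proof will be to verify that these four candidate components actually coincide. Here the recursion~\eqref{eq:pathconnectedoneend} enters: at even levels the basepoint shifts by $b^{2^k}$ and at odd levels by $a^{2^k+1}$. Consequently, in $\Lambda_{k+1}$ the point $q_{k+1}$ lies on a $b$-cycle (respectively $a$-cycle) through a lift of $q_k$ of length growing fast enough that the two opposite rays from $q_{k+1}$ in that coordinate direction both wrap back and meet inside $\Lambda_{k+1} \setminus B_{k+1}(q_{k+1},R_k)$. Alternating the $a$- and $b$-shifts across two consecutive levels, and lifting the resulting connecting paths back to $L_q$ via $\nu_{k+1}$ (or $\nu_{k+2}$), I expect to unite all four coboundary edges of $B(q,R_k)$ within a single unbounded path-connected component.

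The hard part will be to produce these connecting paths explicitly and to check that they avoid the forbidden ball. This requires tracking how the $a$- and $b$-cycles through $q_k$ embed under the bonding map $\nu_k^{k+1}$, together with distance estimates analogous to Lemma~\ref{Lemma:metriconGamma} but centered at $q_{k+1}$ rather than $\id_{k+1}$. Once this combinatorial bookkeeping is completed, the exhaustion $\{B(q,R_k)\}_{k \in \mb{N}_0}$ will give exactly one unbounded component at each level, so $L_q$ has a single end.
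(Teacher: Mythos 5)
Your overall strategy is in the right spirit --- exhaust $L_q$ by balls centred at $q$, project to the finite graphs $\Lambda_i$, and exploit the fact that the recursion~\eqref{eq:pathconnectedoneend} drives $q_i$ away from $\id_i$ --- but as written there are two genuine gaps. First, the proposed upper bound of ``at most four unbounded components by coboundary counting'' does not transfer from Proposition~\ref{Prop:pathcomponentofid}: there the count of four coboundary edges rests on the special fact that $B(\id,N_k)$ is precisely the fundamental domain of the covering $\nu_k$, and a ball $B(q,R_k)$ centred at $q$ admits no such description, so its coboundary may contain many more edges and you obtain no a priori bound on the number of components. Second, and more seriously, the decisive step --- producing the connecting paths in $L_q\backslash B(q,R_k)$ and, crucially, checking that a connecting path found at one finite level lifts to a path at \emph{every} higher level joining the correct pair of preimages --- is exactly the content of the lemma and is deferred as ``bookkeeping to be completed''. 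A path $c_j$ in $\Lambda_j$ from $s_j$ to $t_j$ lifts to $\Lambda_i$ ($i>j$) starting at $s_i$, but nothing in your sketch guarantees that this lift terminates at $t_i$ rather than at some other preimage of $t_j$; without that, no path in $L_q$ has been produced at all.

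The paper's proof closes both gaps by a cleaner route: it shows directly that $U_k=L_q\backslash B(q,N_k-\breukje{1}{2})$ is path-connected, with no intermediate ``four components'' stage. The two facts it uses are: (i) by the recursion~\eqref{eq:pathconnectedoneend} the distance $d_i(q_i,\id_i)$ grows without bound, so for $i$ large the projected ball $\nu_i\big(B(q,N_k-\breukje{1}{2})\big)=B_i\big(q_i,N_k-\breukje{1}{2}\big)$ lies in a single lobe $\mc{Q}_i\cup\{\id_i\}$ of $\Lambda_i$, and the complement of this ball inside that lobe is path-connected (this is the precise form of your ``opposite rays wrap back and meet''); (ii) given $s,t\in U_k$, one chooses $j$ large compared with the lengths of fixed paths from $q$ to $s$ and to $t$, joins $s_j$ to $t_j$ by a path $c_j$ inside $\mc{Q}_j\cup\{\id_j\}$, and invokes the structure of the coverings over that lobe to conclude that the lift of $c_j$ starting at $s_i$ ends at $t_i$ for all $i>j$. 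If you wish to keep the ``merge the rays'' framing you will still need to establish exactly these two facts, so the argument reduces to the one in the paper.
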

\begin{proof} We claim that for each $k > 0$ the set $U_k=L_q \backslash B(q,N_k - \frac{1}{2})$ is path-connected. First notice that for any $i > k$ we have
  \begin{align*}\nu_i\Big(B\big(q,N_k - \breukje{1}{2}\big)\Big) & = B_i \big(q_i,N_k - \breukje{1}{2} \big),\end{align*}
and if $d(q,v)> N_k - \frac{1}{2}$, then $\nu_i(v) \in B_i\big(q_i, N_k - \frac{1}{2}\big)$ for at most a finite number of $i \in \mb{N}_0$. Denote by $\mc{Q}_i$ and $\mc{P}_i$ path-connected components of $\Lambda_i \backslash \{\id_i\}$ so that $q_i \in \mc{Q}_i$, and notice that if $i > k$ then $\nu_i\Big(B\big(q,N_k - \breukje{1}{2}\big)\Big) \subset \mc{Q}_i \cup \{\id_i\}$. By construction of $\Lambda_i$ and the choice of the radius of the balls (see Lemma \ref{Lemma:metriconGamma}) the complement $\big(\mc{Q}_i \cup \{\id_i\}\big) \backslash \nu_i\Big(B\big(q,N_k - \breukje{1}{2}\big)\Big)$ is path-connected.

Let $s=(s_n)$ and $t=(t_n)$ be points in $U_k$, and $\gamma$ and $\delta$ be paths joining $q$ with $s$ and $t$ respectively.  Choose $j > k$ so that
   \begin{align*}\max\{ \ell(\gamma),\ell(\delta)\}& < N_j - \frac{1}{2},\end{align*}
and for any $i>j$ we have $s_i,t_i \in \Lambda_i \backslash B_i\big(q_i,N_k - \frac{1}{2}\big)$. Then for all $i \geq j$ the paths $\gamma_i = \nu_i \circ \gamma$ and $\delta_i = \nu_i \circ \delta$ are contained in $\mc{Q}_i \cup \{\id_i\}$. Choose a path $c_j$ contained in $\mc{Q}_j \cup \{\id_j\}$ joining $s_j$ and $t_j$. Then for any $i>j$ the lift $c_i$ of $c_j$ with an endpoint $s_i$ has $t_i$ as another endpoint. So $U_k$ is path-connected and $L_q$ has one end. 
\end{proof}
Let $q_0' = \id_0$, $q_1' = b$  and define for $k \in \mb{N}_0$, $k>1$
  \begin{align}\label{eq:pathconnectedoneendagain} q_{2k}' & = a^{2^k} q_{2k-1}', & q_{2k+1}' & = b^{2^k+1} q_{2k}', \end{align}
then $\nu^k_{k-1} (q_k') = q_{k-1}'$, and $q'=(q_k')$ is a vertex in $\Lambda$. By the argument similar to that of Lemma \ref{Lemma:oneendcomponent} the path-connected component $L_{q'}$ has one end.

\subsubsection{Other path-connected components in the Schori example}\label{subsubsec:leavesSchoriother}

The remaining path-connected components of $\Lambda$ can be divided
into two groups: those containing a so-called `dyadic' point, and
those containing a so-called `flip-flopping' point. First we recall
and introduce some notation.

As in Lemma \ref{subsubsec:leavesSchori}, denote by $\mc{A}_k$ the path-connected component of $\Lambda_k \backslash \{\id_k\}$ containing cosets $a^i=[a^i] \in G_0/G_k$, and by $\mc{B}_k$ the other one. The vertex $\id_k$ has three preimages under $\nu^{k+1}_k: \Lambda_{k+1} \to \Lambda_k$, those being the vertices $\id_{k+1}, a^{2^k}, b^{2^k}$. Then $\mc{A}_{k+1} \backslash \{a^{2^k}\}$ consists of three path-connected components: the component $\mc{A}^+_{k+1}$ containing vertices $a^i$ for $i<2^k$, the component $\mc{A}^-_{k+1}$ containing vertices $a^i$ for $ i > 2^k$, and the remaining component $\mc{TA}_{k+1}$. Similarly, $\mc{B}_{k+1} \backslash  \{b^{2^k}\}$ consists of path-connected components $\mc{B}^+_{k+1}$, $\mc{B}^-_{k+1}$ and $\mc{TB}_{k+1}$.

Let $v = (v_k) \in \Lambda$ be a vertex, and assume that $v \notin L_\id$. If for some $k$ we have $v_k \in \mc{A}_k$ (resp. $v_k \in \mc{B}_k$), then either $v_{k+1} \in \mc{A}_{k+1}^+ \cup \mc{A}^-_{k+1}$ (resp. $v_{k+1} \in \mc{B}_{k+1}^+ \cup \mc{B}^-_{k+1}$) or $v_{k+1} \in \mc{TB}_{k+1}$ (resp. $v_{k+1} \in \mc{TA}_{k+1}$), and then the following situations are possible:
  \begin{enumerate}
  \item $v_k \in \mc{A}_{k}^+ \cup \mc{A}^-_{k} \cup \mc{B}_{k}^+ \cup \mc{B}^-_{k}$ for at most a finite number of $k$'s, then either $v_k \in L_q$ or $v_k \in L_{q'}$.
  \item $v_k \in \mc{TA}_{k} \cup \mc{TB}_{k}$ for at most a finite number of indices, then $v$ is called a \textit{dyadic} point. For example, let $j \in \mb{N}_0$ be odd, choose $0 \leq \ell_j < 2^j$ and define for $2k>j$
   \begin{align*}\ell_{2k} & = \ell_{2k-1} + 2^{2k-1}, & \ell_{2k+1} = \ell_{2k}. \end{align*}
Then $(a^{\ell_n})$ is a dyadic point.
  \item there is a cofinal subset $I \in \mb{N}_0$ such that for any $k \in I'$ we have $v_k \in \mc{TA}_k \cup \mc{TB}_k$, and the subset $\mb{N}_0 \backslash I$ is also cofinal. In this case $v$ is called a \textit{flip-flopping} point.
  \end{enumerate}

\begin{Lemma}\label{Lemma:dyadicleaf}
Let $v \in \Lambda$ be a dyadic point. Then $L_v$ has $2$ ends.
\end{Lemma}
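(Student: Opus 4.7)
The proof would parallel that of Proposition \ref{Prop:pathcomponentofid}, adapted to the dyadic situation. Since $v$ is dyadic, fix $K \in \mb{N}_0$ so that $v_k$ lies in one of $\mc{A}^\pm_k$ or $\mc{B}^\pm_k$ for all $k \geq K$. By the symmetries of the Schori construction (swapping $a \leftrightarrow b$ and the orientations $\pm$), I may assume without loss of generality that $v_k \in \mc{A}^+_k$ for every $k \geq K$, so $v_k = a^{\ell_k}$ with $0 < \ell_k < 2^{k-1}$ and $\ell_{k+1} \equiv \ell_k \pmod{2^k}$; thus $(\ell_k)$ encodes a $2$-adic integer.

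To obtain the lower bound of two ends I would mirror the opening of the proof of Proposition \ref{Prop:pathcomponentofid}, based at $v$ instead of $\id$. Define
\[
A_v^+ = \{(a^{\ell_k + m}) : m > N_k\}, \qquad A_v^- = \{(a^{\ell_k - m}) : m > N_k\},
\]
where the coordinates in parentheses are coherent under the bonding maps. A translated version of Lemma \ref{Lemma:metriconGamma} centered at $v_k$ shows $d(v, (a^{\ell_k \pm m})) = m$, realized by an $a^{\pm 1}$-edge geodesic lying in $L_v$. The same projection argument as in Proposition \ref{Prop:pathcomponentofid}---any purported path from $\Gamma_{A_v^+}$ to $\Gamma_{A_v^-}$ inside $L_v \backslash B(v, N_k)$ projects into $\Lambda_i$ to a path whose length blows up with $i$---shows that $\Gamma_{A_v^+}$ and $\Gamma_{A_v^-}$ lie in distinct unbounded path-connected components.

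For the upper bound, the exhaustion $\{B(v, N_k)\}$ again plays the role of a fundamental-domain exhaustion for $\nu_k: L_v \to \Lambda_k$, so the task reduces to showing that, for $i > k$ sufficiently large, the coboundary of $V(B_i(v_i, N_k))$ in $\Lambda_i$ consists of exactly two edges---the two $a$-edges continuing the $a$-cycle of $\Lambda_i$ past the endpoints of the $a$-segment around $v_i$. The dyadic assumption enters here: because $v_i$ lies in $\mc{A}^+_i$ at an $a$-cycle position uniformly bounded away from the wart-attachment vertices $\id_i$ and $a^{2^{i-1}}$, the lifts to $L_v$ of the $b$-side excursions from the $a$-segment around $v_i$ are swallowed by $B_i(v_i, N_k)$ and fail to produce additional unbounded components in the complement.

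The main obstacle is this last combinatorial claim, which will require a careful induction through the Schori covering tower, tracking the recursive structure of the generating sets \eqref{eq:recursiveformula} and showing that for a dyadic position on $\mc{A}^+_i$ the $b$-neighbourhoods of the local $a$-segment close up inside $B_i(v_i, N_k)$ as $i \to \infty$. Once this is granted, the conclusion follows exactly as in the endgame of Proposition \ref{Prop:pathcomponentofid}: every vertex of $L_v$ outside $B(v, N_k)$ is reachable via one of the two identified coboundary $a$-edges, so the complement has at most two unbounded components, matching the lower bound and establishing that $L_v$ has exactly two ends.
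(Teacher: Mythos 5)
Your lower bound is essentially the paper's: the translated rays $A_v^{\pm}$ of $a$-powers based at a point of the $a$-cycle, unbounded by a translated version of Lemma \ref{Lemma:metriconGamma}, and separated by projecting a hypothetical connecting path to $\Lambda_i$ and letting its length blow up with $i$. Two caveats there. First, a dyadic point need not lie on the $a$-cycle; the paper first observes that $d_k(v_k,\{a^1,\dots,a^{2^k-1}\})$ is eventually constant, so that $L_v$ contains some $\tilde{a}=(a^{\ell_k})$, and then works with $\tilde{a}$ --- you assert $v_k=a^{\ell_k}$ outright. Second, your ``WLOG $v_k\in\mc{A}^+_k$ for all $k\geq K$'' is not a harmless normalisation: coherence forces $\ell_{k+1}\equiv\ell_k \pmod{2^k}$, so if in addition $\ell_{k+1}<2^k$ for all large $k$ then $\ell_k$ is eventually constant and $v\in L_\id$, contradicting the definition of a dyadic point. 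Genuine dyadic points necessarily alternate between $\mc{A}^+_k$ and $\mc{A}^-_k$ (or the $\mc{B}$-versions), as in the paper's example $\ell_{2k}=\ell_{2k-1}+2^{2k-1}$. This does not damage the lower-bound argument, but the stated normalisation is vacuous.

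The real gap is in the upper bound. You reduce it to the claim that for large $i$ the coboundary of $V(B_i(v_i,N_k))$ consists of exactly the two outward $a$-edges, with the $b$-side excursions ``swallowed by'' the ball, and you explicitly defer the proof of this combinatorial claim. As stated the claim is false: the paper records that the coboundary of $V\bigl(B_n(a^{\ell_n},N_{j_i})\bigr)$ contains the $a$-type edges $W_n$ \emph{together with} a possibly nonempty set $E_n=\{e_{\lambda_1},\dots,e_{\lambda_{s_n}}\}$ of further edges, so metric balls do not have the clean coboundary you need. The paper's resolution is not to prove $E_n=\emptyset$ but to abandon the ball exhaustion: it adjoins to the ball the (finite) set of vertices $V_{E_n}$ reached through $E_n$ and the edges $E_n$ themselves, forming $K'=B_n(a^{\ell_n},N_{j_i})\cup\Gamma_{V_{E_n}}\cup E_n$, shows that for every $k>n$ the preimage $\tilde{K}'\subset\Lambda_k$ containing $a^{\ell_k}$ has coboundary exactly $W_k$, and takes $K_i$ to be the inverse limit of these enlarged sets; only then does the ``every escaping path must cross $W_k$'' argument give at most two unbounded components. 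Since your proposal rests on an unproved (and, in its stated form, incorrect) version of precisely this step, the proof is incomplete at its crux; you would need either the paper's enlargement of the compact exhaustion or an equivalent argument that the components of $L_v\backslash B(v,N_k)$ accessed through the extra coboundary edges are all bounded.
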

\begin{proof} Suppose that for $k>n$ we have $v_k \in \mc{A}_k$ (resp. $v_k \in \mc{B}_k$). Then the distance
  \begin{align*} d_k(v_k,\{a^1, \ldots, a^{2^k-1}\}) & = \inf\{d_k(v_k,a^\ell) \ | \ 1 \leq \ell \leq 2^k - 1 \} \end{align*}
is constant for all $k > n$. Therefore, every dyadic point is in the
path-connected component of a point $\tilde{a}=(a^{\ell_k})$, where
$0 < \ell_k < 2^k$.

We construct an increasing sequence $K_0 \subset K_1 \subset K_2 \subset \ldots$ of compact subsets in $L_v$ such that $\tilde{a} \in K_0$ and for $i>0$ $L_v \backslash K_i$ has precisely $2$ unbounded path-connected components. Set $K_0  = B(\tilde{a},N_0)$ and suppose $K_{i-1}$ is given. Then there exists $j_i>0$ such that $K_{i-1} \subset B(\tilde{a},N_{j_i})$. Construct $K_i$ as follows. Lemma \ref{Lemma:metriconGamma} together with a simple computation show that $d_k(a^{\ell}_k,\id_k)$ increases with $k$, so there exists $n>0$ such that for all $k \geq n$ the projection $\nu_k\big(B(\tilde{a},N_{j_i})\big) \subset \mc{A}_k$. We can also assume that $2^i < 2^{n-2}$.  Denote $\pm m = \ell_n \pm (2^{j_i} - 1)$. The coboundary of $V\big(B_n(a^{\ell_n},N_{j_i})\big)$ contains the set of edges $W_n=\{e_a^{m}, e_{a^{-1}}^{m}, e_a^{-m}, e_{a^{-1}}^{-m}\} \subset \Lambda_n$, and a possibly empty set $E_n=\{e_{\lambda_1}, \ldots, e_{\lambda_{s_n}}\}$. There is a finite set of vertices $V_{E_n}=\{w_1^n,\ldots,w_m^n\} \in \Lambda_n$ such that a geodesic $\delta$ joining $a^{\ell_n}$ to $w_t$ contains an edge from $E_n$. The graph $K' = B_n(a^{\ell_n},N_{j_i}) \cup \Gamma_{V_{E_n}} \cup E_n$ is pathwise connected. For any $k>n$ there is a unique preimage $\tilde{K}' \subset \Lambda_k$ of $K'$ containing $a^{\ell_k}$, and its coboundary is precisely the set $W_k = \{e_a^{m}, e_{a^{-1}}^{m}, e_a^{-m}, e_{a^{-1}}^{-m}\} \subset \Lambda_k$. The inverse limit of $\tilde{K}'$ is a subset $K_i \subset L_v$ such that $K_i \owns \tilde{a}$.

Denote by $\bar{a}^m_k$ a vertex in $\Lambda_k$ corresponding to the coset $[a^ma^{\ell_k}]$, and define
 \begin{align} \label{eq:connectedsets2ended} A_i^+ & = \{\bar{a}^m=(\bar{a}^m_k) \ | \ m > N_{j_i}\}, & A_i^- & = \{\bar{a}^{-m}=(\bar{a}^{-m}_k) \ | \ m > N_{j_i} \}. \end{align}
The graphs $\Gamma_{A_k^+}$ and $\Gamma_{A_k^-}$ are clearly path-connected. To see that a path-connected component of $L_v \backslash K_i$ containing such graph is unbounded we show that $\bar{a}^m = \bar{a}^s$ if and only if $m=s$, which implies that $\Gamma_{A_k^\pm}$ contains geodesics of arbitrary length. Indeed, choose $n \in \mb{N}_0$ such that $\max\{m,s\} < 2^n$, then for all $k>n$ we have $\bar{a}^s_k = \bar{a}^t_k$ if and only if $s=t$. Next, consider the points $\bar{a}^{2^{j_i}} \in \mc{A}_i^+$, $a^{-2^{j_i}} \in \mc{A}_i^-$, and assume there exists a path $\gamma$ contained in $L_v \backslash K_i$ joining $\bar{a}^{2^{j_i}}$ and $\bar{a}^{-2^{j_i}}$. Choose $n > 0$ so big that for $k>n$  the projection $\nu_k(K_i) \subset \mc{A}_k$ and $2^{j_i} < 2^{n-2}$. The projection $\gamma_k = \nu_k \circ \gamma$ is a path joining $\bar{a}^{2^{j_i}}_k$ and $\bar{a}^{-2^{j_i}}_k$ and so must contain a sequence of $a$-edges. Assuming $a^{2^{j_i}}_k$ to be a starting point of $\gamma_k$, the first edge traversed by $\gamma_k$ must be $e_b$, $e_{b^{-1}}$ or $e_a$. In the two former cases $\gamma$ contains a loop based at $\bar{a}^{2^{j_i}}$, so we can as well assume that the first edge traversed by $\gamma_k$ is $e_a$. By a similar argument $\gamma_k$ must contain all $a$- or all $a^{-1}$-edges in $\Lambda_k$ such that $e_\lambda \cap \nu_k(K_i) = \emptyset$. Then we have the following estimate on the length $\ell(\gamma)$ and $\ell_k(\gamma_k)$
  \begin{align*} \ell(\gamma) & \geq \ell_k(\gamma_k) \geq  2^k-(2^{j_i}+1),\end{align*}
which implies that $\gamma$ has unbounded length. Therefore, $\Gamma_{A_k^+}$ and $\Gamma_{A^-_k}$ lie in different path-connected components of $L_v \backslash K_i$. We show that $L_v \backslash K_i$ has precisely two path-connected components. Assume $v \in L_v \backslash K_i$ and let $\delta$ be a path joining $v$ with $\tilde{a}$. Then $\delta_k$ contains an edge of $W_k$, and it follows that $v$ is in a path-connected component of $A_i^+$ or $A_i^-$. So there is at most $2$ path-connected components.
\end{proof}
\begin{Lemma}\label{Lemma:genericleaf}
Let $v \in \Lambda$ be a flip-flopping point. Then $L_v$ has $1$
end.
\end{Lemma}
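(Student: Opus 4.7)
The plan is to adapt the proof of Lemma~\ref{Lemma:oneendcomponent} to an arbitrary flip-flopping point $v = (v_k)$. As in that proof, one should construct a cofinal sequence of compact sets $K_n \subset L_v$ and show that each complement $U_n = L_v \setminus K_n$ is path-connected. Using flip-flopping, choose a cofinal sequence $k_1 < k_2 < \cdots$ of indices at which $v_{k_n} \in \mathcal{TA}_{k_n} \cup \mathcal{TB}_{k_n}$, and put $K_n = B\bigl(v, N_{k_n} - \tfrac{1}{2}\bigr)$.

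Given $s = (s_m), t = (t_m) \in U_n$ and joining paths $\gamma, \delta$ in $L_v$ from $v$ to $s,t$, I would pick $j$ very large (with $j$ in the cofinal sequence) so that $\max\{\ell(\gamma),\ell(\delta)\} < N_{k_j} - \tfrac{1}{2}$ and so that $s_i, t_i \notin B_i(v_i, N_{k_n} - \tfrac{1}{2})$ for all $i \geq k_j$. WLOG assume $v_{k_j} \in \mathcal{TB}_{k_j}$. The crucial structural observation, to be verified by an induction on the Schori construction, is that $\mathcal{TB}_{k_j}$ is a pendant subgraph of $\Lambda_{k_j}$ attached to the main graph at the single vertex $a^{2^{k_j - 1}}$; provided $k_j$ is chosen large enough relative to $k_n$, the projected ball $\nu_{k_j}(K_n) = B_{k_j}(v_{k_j}, N_{k_n} - \tfrac{1}{2})$ is contained in the interior of $\mathcal{TB}_{k_j}$ and does not reach the attachment vertex. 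Consequently $\Lambda_{k_j} \setminus \nu_{k_j}(K_n)$ is path-connected: any point in the part of $\mathcal{TB}_{k_j}$ lying outside the ball can be connected through the attachment vertex $a^{2^{k_j - 1}}$ to the rest of the graph.

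This gives a path $c_{k_j}$ in $\Lambda_{k_j}\setminus\nu_{k_j}(K_n)$ from $s_{k_j}$ to $t_{k_j}$. The main obstacle, exactly as in Lemma~\ref{Lemma:oneendcomponent}, is that one must further ensure that $c_{k_j}$ lifts to a path in $L_v$ whose endpoints are $s$ and $t$ (rather than some other preimages of $s_{k_j}, t_{k_j}$). This lifting condition is equivalent to the loop $\gamma_{k_j}\cdot c_{k_j}\cdot \delta_{k_j}^{-1}$ at $v_{k_j}$ lying in $\nu_{k_j *}\pi_1(L_v,v) \subset \pi_1(\Lambda_{k_j},v_{k_j})$. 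I would handle this exactly as in Lemma~\ref{Lemma:oneendcomponent}: choose $c_{k_j}$ so that for each $i \geq k_j$, the lift $c_i$ to $\Lambda_i$ starting at $s_i$ ends at $t_i$. This works because both $\gamma_i$ and $\delta_i$ are trapped in the preimage of the pendant handle $\mathcal{TB}_{k_j}$, so by choosing $c_{k_j}$ to exit $\mathcal{TB}_{k_j}$ through the unique attachment vertex before re-entering to reach $t_{k_j}$, the loop becomes null-homotopic in $\Lambda_i$ relative to the covering, independently of $i$.

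Combining these pieces, the inverse limit of the lifts $c_i$ produces a path in $L_v \setminus K_n$ from $s$ to $t$, so $U_n$ is path-connected and $L_v$ has one end. The hardest step is checking the pendant-handle property of $\mathcal{TB}_{k_j}$ with the correct metric estimates so that $\nu_{k_j}(K_n)$ really is confined to the interior; this requires tracking the Schori construction carefully through the inductive definition of $\mathcal{TA}_k, \mathcal{TB}_k$ and the recursion \eqref{eq:recursiveformula}.
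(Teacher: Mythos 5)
Your overall strategy --- pass to a cofinal sequence of flip-flopping levels, trap the projections of $v$, $s$, $t$ and the joining paths inside the pendant subgraphs $\mc{TA}_k\cup\mc{TB}_k$, connect at a finite level and lift --- is the same as the paper's, and your structural observation that $\mc{TB}_{k}$ is a pendant piece of $\Lambda_k$ hanging off a single cut vertex (which is $b^{2^{k-1}}$, not $a^{2^{k-1}}$) is correct and is exactly what the paper uses. However, there are two genuine gaps. First, taking the exhausting compacta to be bare balls $K_n=B(v,N_{k_n}-\frac{1}{2})$ does not give path-connected complements: even when $\nu_{k_j}(K_n)$ lies in the interior of $\mc{TB}_{k_j}$, it can disconnect $\mc{TB}_{k_j}$ and leave pieces that do not contain the attachment vertex, so $\Lambda_{k_j}\backslash\nu_{k_j}(K_n)$, and correspondingly $L_v\backslash K_n$, may have bounded components in addition to the unbounded one. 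The paper anticipates precisely this: it replaces each ball $B_k(v_k,N_{j_i})$ by $W_k=B_k(v_k,N_{j_i})\cup\mc{U}_k$, where $\mc{U}_k$ is the (possibly empty) family of complementary components not containing $\id_k$, checks that $\Lambda_k\backslash W_k$ is path-connected, and takes $K_i$ to be the inverse limit of the $W_k$.

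Second, and more seriously, your connecting path is allowed to leave $\mc{TB}_{k_j}$ through the attachment vertex, traverse the main graph, and re-enter. The assertion that the resulting loop $\gamma_{k_j}\,c_{k_j}\,\delta_{k_j}^{-1}$ is ``null-homotopic relative to the covering, independently of $i$'' is exactly the point that fails: the excursion through the main graph is a loop at $b^{2^{k_j-1}}$ whose behaviour under the coverings $\nu^i_{k_j}$ is governed by the monodromy, and since the kernel at a flip-flopping point is trivial, no nontrivial such loop lifts to a loop at every level; the lift $c_i$ then terminates at a preimage of $t_{k_j}$ different from $t_i$. Note that finite-level connectivity alone cannot suffice: for a dyadic point $\tilde{a}$ the complements $\Lambda_k\backslash\nu_k(K_i)$ are also path-connected for every $k$ (one can go the long way around the $a$-cycle, or out through $\id_k$ and back), yet Lemma \ref{Lemma:dyadicleaf} shows $L_{\tilde{a}}$ has two ends --- the obstruction lives entirely in the lifting step. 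The paper's proof sidesteps this by first absorbing the bounded components into $K_i$ and then choosing $c_n$ \emph{inside} $\mc{TA}_n\cup\{a^{2^{n-1}}\}$ (resp.\ $\mc{TB}_n\cup\{b^{2^{n-1}}\}$), where the construction of the coverings $\Lambda_k\to\Lambda_n$ guarantees that the lift of $c_n$ starting at $s_k$ ends at $t_k$. To repair your argument you would either have to enlarge the $K_n$ as the paper does, or restrict to unbounded complementary components and still produce a connecting path that never leaves the pendant subgraph.
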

\begin{proof} We construct a sequence of compact sets $K_0 \subset K_1 \subset K_2 \ldots$ such that $K_0 \owns v$ and $L_v \backslash K_i$ has one unbounded path-connected component. Set $K_0 = B(v,N_0)$, and suppose $K_{i-1}$ is given. Then there exists $j_i>0$ such that $K_{i-1} \subset B(v,N_{j_i})$. Since $d_k(a^{2^k},\id_k)$ and $d_k(b^{2^k},\id_k)$ increase with $k$, there is an $n>0$ such that for any $k\geq n$ such that $v_k \in \mc{TA}_k \cup \mc{TB}_k$ we have $B_k(v_k,N_{j_i}) \subset \mc{TA}_k \cup \mc{TB}_k$. Then $\nu_k\big(B(v,N_{j_i})\big) = B_k(v_k,N_{j_i})$. Denote by $U_n$ the path-connected component of $\Lambda_n \backslash B_n(v_n,N_{j_i})$ containing $\id_n$, and let $\mc{U}_n$ be a possibly empty set of remaining path-connected components. If $\bar{U}_n \in \mc{U}_n$, then $\bar{U}_n \subset \mc{TA}_n \cup \mc{TB}_n$, and for any $k \geq n$ the preimage $\bar{U}_k \owns v_k$ is a path-connected component of $\Lambda_k \backslash B_k(v_k,N_{j_i})$. Let $W_k = B_k(v_k,N_{j_i}) \cup \mc{U}_k$, then for each $k\geq n$ the complement $\Lambda_k \backslash W_k$ is path-connected. Set
   $$K_i = \lim_{\longleftarrow}\{W_k,\nu^{k+1}_k,k \geq n\}.$$

Let $s=(s_k),t=(t_k) \in L_v \backslash K_i$, and $\gamma$ and $\delta$ be paths joining $s$ and $t$ with $v$ respectively. Let $I \subset \mb{N}_0$ be a cofinal subsequence of indices such that if $k \in I$ then $v_k \in \mc{TA}_k \cup \mc{TB}_k$. Let $M = \max\{\ell(\gamma), \ell(\delta)\}$ and choose $n \in I$ such that $B_n(v_n,M) \subset \mc{TA}_n \cup \mc{TB}_n$. Then $\gamma_n = \nu_n \circ \gamma$ and $\delta_n = \nu_n \circ \delta$ are contained in $\mc{TA}_n \cup \mc{TB}_n$, and so either $s_n,t_n \in \mc{TA}_n \cup \{a^{2^n}\}$ or $s_n,t_n \in \mc{TB}_n \cup \{b^{2^n}\}$. Then there exists a path $c_n$ contained in one of these sets, which joins $s_n$ with $t_n$. From the construction of $\Lambda_k$ it follows that for any $k \in I$ such that $k>n$ the lift $c_k$ of $c_n$ such that one of its endpoints is $s_k$ has $t_k$ as another endpoint. So there is a path $c$ in $L_v \backslash K_i$ joining $s$ and $t$. It follows that $L_v$ has one end. 
\end{proof}

\subsubsection{The number of leaves with one or two ends}

It was shown in Blanc \cite{B} that if a minimal compact foliated metric space space has a residual set of points whose leaves have $2$ ends, then any leaf in $\mc{F}$ has one or two ends. In the Schori example the leaf $L_\id$ has four ends, therefore, the minimal foliation of $X_{\rm{Sch}}$ must have a residual set of points with leaves with one end. That is indeed the case, as Proposition \ref{Prop:meager} shows directly.

Denote by $\mc{AB} \subset V(\Lambda)$ the subset of vertices which lie in $4$- or $2$-ended path-connected components of $\Lambda$.

\begin{Prop}\label{Prop:meager}
The set $\mc{AB} \subset V(\Lambda)$ of vertices which belong to $4$- and $2$-ended path-connected components of $\Lambda$ is meagre.
\end{Prop}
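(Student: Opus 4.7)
My plan is to display $\mc{AB}$ as a countable union of closed nowhere dense subsets of $V(\Lambda)$. First observe that $V(\Lambda)=\lim_{\longleftarrow} V(\Lambda_k)$ is a Cantor set: every vertex in $V(\Lambda_k)$ has three preimages in $V(\Lambda_{k+1})$, forcing the inverse limit of these finite sets to be perfect, compact and totally disconnected. I then split $\mc{AB}$ into two pieces according to end count: the vertex set $V(L_\id)$ of the (unique) $4$-ended leaf, and the set $\mc{D}$ of dyadic vertices, which by Lemma \ref{Lemma:dyadicleaf} is exactly the collection of vertices lying in $2$-ended leaves.

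For the first piece, $V(L_\id)$ is countable and $V(\Lambda)$ is perfect, so $V(L_\id)$ is a countable union of singletons, each nowhere dense, and is therefore meagre. For the dyadic piece I write $\mc{D}=\bigcup_{N\geq 0} D_N$ where
\begin{align*}
D_N=\{v=(v_k)\in V(\Lambda) \ | \ v_k\notin \mc{TA}_k\cup\mc{TB}_k \text{ for all } k\geq N\}.
\end{align*}
Each $D_N$ is closed, since $\nu_k:V(\Lambda)\to V(\Lambda_k)$ is continuous and $V(\Lambda_k)\setminus(\mc{TA}_k\cup\mc{TB}_k)$ is a finite, hence clopen, subset; indeed $D_N$ is an intersection of clopens.

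The key remaining step, and the main obstacle, is to show that each $D_N$ has empty interior. I plan to produce a flip-flopping vertex inside every basic cylinder $[w]=\{v\in V(\Lambda) \ | \ v_M=w\}$, $w\in V(\Lambda_M)$, since by Lemma \ref{Lemma:genericleaf} such vertices do not belong to any $D_N$. The cleanest route is via minimality of the Schori solenoid: the monodromy action of $G_0$ on each finite coset space $G_0/G_k$ is transitive, so every leaf of $\Lambda$ is dense in $V(\Lambda)$; in particular the leaf of any known flip-flopping point meets every cylinder, and since the flip-flopping property is a leaf invariant, any such meeting point is itself flip-flopping. A more self-contained alternative is to extend $w$ by hand up the tower of covers, exploiting that $|\mc{TA}_k\cup\mc{TB}_k|$ grows like $3^k$ while $|\mc{A}_k^{\pm}|$ and $|\mc{B}_k^{\pm}|$ grow only like $2^k$, so lifts with cofinally many coordinates in $\mc{TA}_k\cup\mc{TB}_k$ are always available. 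Either way, each $D_N$ is nowhere dense, so $\mc{D}$, and hence $\mc{AB}=V(L_\id)\cup\mc{D}$, are meagre.
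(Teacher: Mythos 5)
Your proof is correct, but it organises the argument differently from the paper, so a comparison is worthwhile. The paper does not separate $L_\id$ from the $2$-ended leaves: it takes the single ``core'' set $A_0\cup B_0$ of threads $(a^{\ell_k})$, $(b^{\ell_k})$ with $\ell_{k}\equiv\ell_{k-1} \bmod 2^{k-1}$ (which meets $L_\id$ as well as every $2$-ended leaf), thickens it to the sets $A_m\cup B_m$ of vertices within leaf-distance $m$ of the core, proves these are closed by a separate argument (Lemma~\ref{Lemma:closedsets}), and gets empty interior from the density of $1$-ended leaves. Your decomposition replaces the metric thickening by the combinatorial sets $D_N$, which has the advantage that closedness is immediate ($D_N$ is an intersection of clopen cylinder conditions, whereas the paper must argue that leaf-metric neighbourhoods of $A_0$ are closed in the transverse topology); and your treatment of $V(L_\id)$ as a countable subset of a perfect set is a genuine shortcut. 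The cost is that you lean harder on the classification of Section~\ref{subsubsec:leavesSchoriother}: Lemma~\ref{Lemma:dyadicleaf} alone gives only that dyadic points lie in $2$-ended leaves, and the identification of $\mc{D}$ with the full vertex set of the $2$-ended leaves (equivalently, the inclusion $\mc{AB}\subset V(L_\id)\cup\bigcup_N D_N$) requires the exhaustiveness of the trichotomy together with Lemmas~\ref{Lemma:oneendcomponent} and~\ref{Lemma:genericleaf}, so that no vertex of a $2$-ended leaf can be flip-flopping. Both proofs ultimately rest on the same density input for the empty-interior step; your second, self-contained route for it is sound, since every vertex of $\mc{A}_k\cup\mc{B}_k$ has a lift in $\mc{TA}_{k+1}\cup\mc{TB}_{k+1}$ (and the three lifts of $\id_k$ reach a $\mc{T}$-component one level later), so every cylinder does contain a point with cofinally many coordinates in the $\mc{T}$-components.
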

\begin{proof} Consider the subset
  \begin{align*} A_0& = \big\{ (a^{\ell_k}) \in V(\Lambda) \ \big| \ ell_{k-1} = \ell_k \mod 2^{k-1}  \big\}. \end{align*}
All vertices in $A_0$ lie either in $L_\id$ or in a $2$-ended path-connected component of $\Lambda$, the first alternative occurring if $(a^{\ell_k})$ is eventually constant. Denote also
  \begin{align*} A_m & = \Big\{ (v_k) \in V(\Lambda) \ \big| \  \min_{(u_k) \in A_0} d \big( (v_k), (u_k) \big) \leq m \Big\}, \end{align*}
where $d$ denotes metric on the leaves of $\Lambda$. We are going to show that for each $m \geq 0$ the set $A_m$ is nowhere dense. For that it is enough to show that $A_m$ is closed, i.e. ${\rm Cl}(A_m) = A_m$, since $A_m$ has empty interior by minimality of $1$-ended path-connected components.

\begin{Lemma}\label{Lemma:closedsets}
The subset $A_m \subset V(\Lambda)$ is closed.
\end{Lemma}
\begin{proof} First let $m=0$. Let $\big\{(a^{\ell_k})_s \big\}$ be a sequence of elements in $A_0$ converging to a point $(\bar{a}_k) \in {\rm Cl}(A_0)$. Then for each $k \in \mb{N}$ there exists $N_k$ such that for all $s > N_k$ and all $i< k$ we have
  \begin{align*} a^{\ell_i}_s & = \bar{a}_i. \end{align*}
It follows that each $\bar{a}_k \in \nu_k(A_0)$ and so $(\bar{a}_k) \in A_0$.

Let $m>0$, and $\big\{(v_k)_s \big\} \in A_m$ be a sequence converging to $(\bar{v}_k) \in {\rm Cl}(A_m)$. Then
  \begin{align*}\min_{(u_k) \in A_0} d \big( (\bar{v}_k), (u_k) \big) & \leq m\end{align*}
since otherwise there exists an open ball around $\bar{v}_k$, for example, of radius $\breukje{1}{2}$, which does not contain points of $\big\{(v_k)_s \big\}$. Thus $(\bar{v}_k) \in A_m$. 
\end{proof}

Similarly, denote
  \begin{align*} B_0& = \big\{ (b^{\ell_k}) \in V(\Lambda) \ \big| \ b^{\ell_{k-1}} = b^{\ell_k} \mod 2^{k-1}  \big\} \end{align*}
and for $m > 0$ set
  \begin{align*} B_m & = \Big\{ (v_k) \in V(\Lambda) \ \big| \  \min_{(u_k) \in B_0} d \big( (v_k), (u_k) \big) \leq m \Big\}. \end{align*}
Then for $m \geq 0$ the set $B_m$ is nowhere dense, and $A_m \cup B_m$ is also nowhere dense. Any $2$-ended path-connected component intersects $A_0 \cup B_0$, so
   \begin{align*} \mc{AB} & = \bigcup_{m=0}^\infty \Big( A_m \cup B_m\Big),\end{align*}
and $\mc{AB}$ is a meagre set. 
\end{proof}

Proposition \ref{Prop:meager} implies that the set of vertices in $\Lambda$ which lie in $1$-ended path-connected components is residual, so the generic leaf in $X_{\rm Sch}$ has $1$ end \cite{CC1}.

\begin{Remark}
{\rm
The number of path-connected components of $\Lambda$ with $2$ ends is uncountable. Consider the dyadic solenoid
  \begin{align*} \Sigma & = \lim_{\longleftarrow} \{ \mb{S}^1, f^k_{k-1},\mb{N}_0 \}, \end{align*}
where $f^k_{k-1}:\mb{S}^1 \to \mb{S}^1$ is a $2$-fold covering. The
foliation of $\Sigma$ by path-connected components has an
uncountable infinity of leaves. Recall from Section
\ref{subsubsec:leavesSchori} that there is a subset of leaves with
$2$ ends such that each leaf contains a dyadic point $(a^{\ell_k})$,
and notice that to $(a^{\ell_k}) \in X_{\rm{Sch}}$ one can associate
a unique point in $\Sigma$. Using an argument on the existence of
paths as in Section \ref{subsubsec:leavesSchori} one sees that if
two points in $\Sigma$ represent different path-connected
components, then the corresponding points $(a^{\ell_i}) \in
X_{\rm{Sch}}$ represent different path-connected components of
$X_{\rm{Sch}}$. Therefore, $X_{\rm{Sch}}$ has an uncountable
infinity of leaves with $2$ ends. }
\end{Remark}

\subsection{Generalised Schori example: solenoid with a $2n$-ended path-connected component}\label{subsec:generalisedSchori}

The Schori example can be generalised to obtain a solenoid with a $2n$-ended path component, for any $n \in \mb{N}$. The construction is based on that of the original Schori example.

\subsubsection{The generalised Schori example with a $4n$-ended path-connected component}\label{subsubsec:schori4n}

Let $X_0$ be a genus $2$ surface with two $n$-handles $H_{1}$ and $H_{2}$ distinguished, let $C_1$ and $C_2$ be closed meridional curves in the interior of $H_1$ and $H_2$ respectively. The handles $H_1$ and $H_2$ intersect along their boundary curves. Cut the handles along $C_1$ and $C_2$, pull them apart by an appropriate homeomorphisms to obtain the cut surface $\widehat{X}_0$. Set $\bar{X}_0 = \rm{Cl}(\widehat{X}_0)$and consider copies $\bar{X}_0^{(i)}$, $i = 1,\ldots,2n+1$ of $\bar{X}_0$ with handles $\bar{H}_1^{(i)}$ and $\bar{H}_2^{(i)}$ and pairs of boundary circles ${C_1^{(i)}}'$ and ${C_1^{(i)}}''$, and ${C_2^{(i)}}'$ and ${C_2^{(i)}}''$ in each handle. Make identifications so that for $i = 1,\ldots,n$ ${C_1^{(2i-1)}}'$ is identified with ${C_1^{(2i)}}''$, ${C_1^{(2i-1)}}''$ is with ${C_1^{(2i)}}'$, ${C_2^{(2i)}}'$ with ${C_2^{(2i+1)}}''$, ${C_2^{(2i)}}''$ with ${C_2^{(2i+1)}}'$ (see Figure \ref{fig:first-identif} for $n=2$), and other cuts are identified trivially, i.e. ${C_t^{(s)}}'$ is identified with ${C_t^{(s)}}''$. Denote the obtained genus $2n+2$ surface by $X_1$. Distinguish $2n$ handles $H_1,\ldots,H_{2n}$ (those produced as a result of non-trivial identifications). Define $f^1_0:X_1 \to X_0$ as in the Schori example.

\begin{figure}
\centering
\includegraphics [width=7cm] {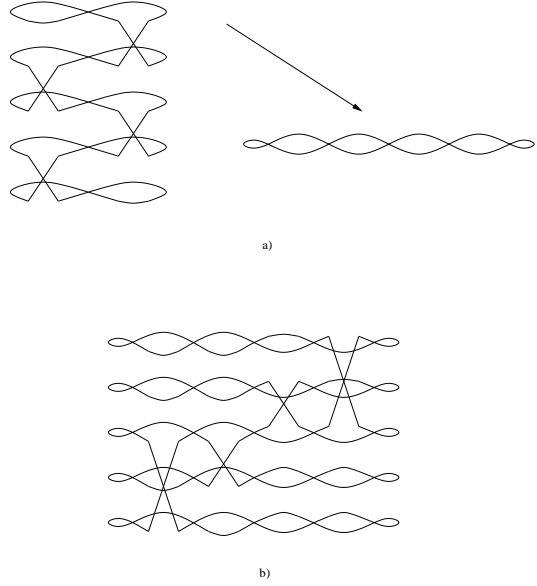}
\caption{Schreier diagrams for the weak solenoid with an $8$-ended path component: a) the $5$-fold covering space $\Lambda_1$ of the figure $8$, b) the $5$-fold covering space $\Lambda_2$ of $\Lambda_1$}
 \label{fig:first-identif}
\end{figure}

\begin{figure}
\centering
\includegraphics [width=3cm] {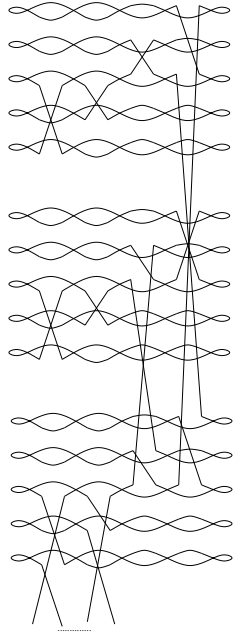}
\caption{Schreier diagrams for the weak solenoid with an $8$-ended path component: the $5$-fold covering space $\Lambda_3$ of $\Lambda_2$, three first layers.}
 \label{fig:second-identif}
\end{figure}

Let $X_k$ be a genus $\big((2n+1)^k - 2kn\big)$-surface, let $2n$ $2^k$-handles $H_1,\ldots,H_{2n}$ in $X_k$ be distinguished, and let $C_1,\ldots,C_{2n}$ be simple closed curves in the handles. Following the procedure described in the first step obtain $\bar{X}_k^{(i)}$, $i=1,\ldots,2n+1$ cut surfaces, so that in each of them the cut handles $\bar{H}_j^{(i)}$, $j=1,\ldots,2n$, are distinguished, and ${C^{(i)}_j}'$ and ${C^{(i)}_j}''$ are boundary curves contained in $\bar{H}^{(i)}_j$. Then make identifications as follows (see Figure \ref{fig:second-identif} for $n=2$):
\begin{enumerate}
\item for $1 \leq s \leq n$ identify ${C^{(s)}_{s}}'$ with ${C^{(n+1)}_{s}}''$, and ${C^{(s)}_{s}}''$ with ${C^{(n+1)}_{s}}'$,
\item for $n+2 \leq s \leq n+1$ identify ${C^{(s)}_{s-1}}'$ with ${C^{(n+1)}_{s-1}}''$, and ${C^{(s)}_{s-1}}''$ with ${C^{(n+1)}_{s-1}}'$,
\item identify other handles trivially.
\end{enumerate}

Denote the resulting identification space $X_{k+1}$, and define $f^{k+1}_k:X_{k+1} \to X_k$ as in the Schori example. Obtain $X_{\rm{Sch}}^{4n}$ as the inverse limit of the sequence of covering spaces $f^k_{k-1}$.

Choose a point $x_0 \in H_1 \cap H_2$. For each $k>0$ there is a unique point $x_k$ such that $f^k_0(x_k) = f^k_{k-1} \circ \cdots \circ f^1_0(x_k) = x_0$ and $x_0 \in H_n \cap H_{n+1}$, where $H_i$ are handles in $X_k$. Denote $x=(x_k)$. Using the method of the Schreier continuum one obtains that the path-connected component $L_x$ has $4n$ ends, topologically almost all leaves in $X_{\rm{Sch}}^{4n}$ have $1$ end and there is an uncountable infinity of path-connected components with $2$ ends.

\subsubsection{The generalised Schori example with a $4n+2$-ended path component}

Such an example is obtained by a slight alteration of the one in Section \ref{subsubsec:schori4n}: at the first step one identifies $2n$ cut copies of a genus $2$ surface $X_0$ along $2n-1$ cuts to obtain a genus $2n$ surface $X_1$. On the $k$-th step one considers a genus $\big((2n)^k - k(2n-1)\big)$-surface $X_k$ with $(2n-1)$ handles, which are identified as in Section \ref{subsubsec:schori4n}, only there is one less identification of the second type. The resulting space $X_{k+1}$ is a covering space of $X_0$.

So one obtains a weak solenoid $X_{\rm{Sch}}^{4n+2}$ with one $4n+2$-ended path component, which also has path-connected components with $1$ and $2$ ends. By the same argument as in Section \ref{subsubsec:schori4n} the generic path-connected component has $1$ end, and there is an uncountable infinity of path-connected components with $2$ ends.

\subsection{Rogers-Tollefson example revisited} \label{subsec:RTsolenoid}

The example introduced by Rogers and Tollefson \cite{RT} (see also Example \ref{Example:RT}) is that of a non-homogeneous solenoid
   $$K_\infty = \lim_{\longleftarrow} \{ K_i, f^i_{i-1},\mb{N}_0 \},$$
where $K_i$ is the Klein bottle and $f^i_{i-1}$ is a $2$-fold covering map constructed as follows (see Fokkink and Oversteegen \cite{FO}).

Represent the $2$-torus as the quotient space $\mb{R}/\mb{Z} \times \mb{R}/\mb{Z}$ and define
  \begin{align}\label{eq:barfcover} \bar{f}&:\mb{R}/\mb{Z} \times \mb{R}/\mb{Z} \to \mb{R}/\mb{Z} \times \mb{R}/\mb{Z}  : (x,y) \mapsto (x,2y), \end{align}
a $2$-fold covering map of the torus by itself. The Klein bottle is the quotient of $\mb{R}/\mb{Z} \times \mb{R}/\mb{Z}$ under the map $i:(x,y) \mapsto (x+ \breukje{1}{2},-y)$, and the covering projection \eqref{eq:barfcover} induces a $2$-fold covering map $f:K \to K$. Define the curves $b$ and $a$ in $K$ by $b(t) = (\breukje{1}{2}t,0)$ and $a(t) = (0,t)$ for $t \in [0,1]$. Then the homotopy classes $a = [a]$ and $b = [b]$ generate the fundamental group $\pi_1(K,\{0\})$ with a single relation $bab^{-1} = a^{-1}$. The induced map of the fundamental group is
  \begin{align*} f_\sharp & : \pi_1(K,\{0\}) \to \pi_1(K,\{0\}):(b,a) \mapsto (b,a^2), \end{align*}
with the corresponding decreasing chain of groups  $G_k = \langle b, a^{2k} \rangle$. Although $f$ is a regular covering map, the composition of $f$ with itself is not regular. The kernel of the group chain $\{G_k\}$ depends on the point, more precisely, for $\id = (\{0\})$ we have $\mc{K}_\id = \langle b \rangle$ and for any other point $x \in f_0^{-1}(\{0\})$ which does not belong to the path-connected component of $\id$ we have $K_x = \langle b^2 \rangle$. The Schreier diagrams for the pairs $(G_0,G_k)$ with $k=0,1,2,3$ are shown in Figure \ref{fig:RTsolenoid}.

\begin{figure}
\centering
\includegraphics [width=6cm] {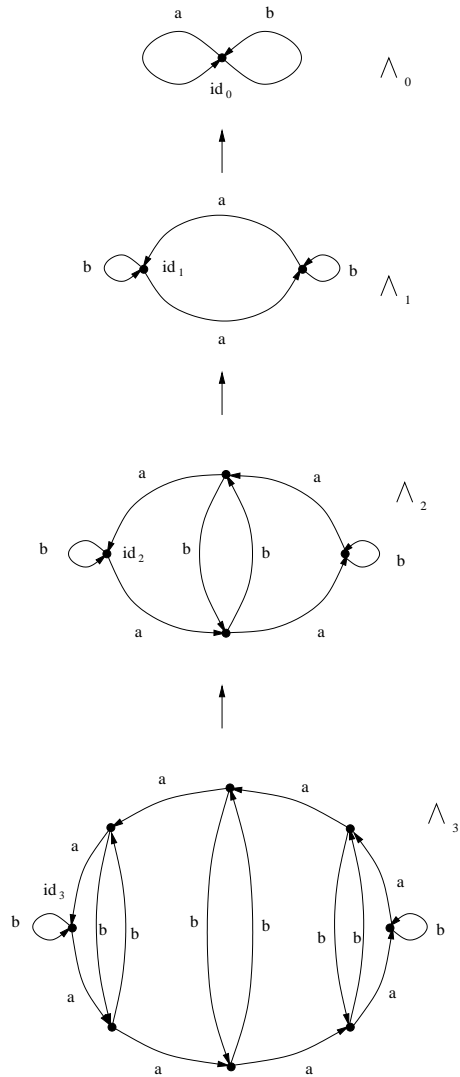}
\caption{Schreier diagrams for the Rogers-Tollefson solenoid for $i=0,1,2,3$}
 \label{fig:RTsolenoid}
\end{figure}

Using the argument on the lengths of paths similar to the ones in Section \ref{subsec:SchSchori} we obtain that the path-connected component of the point $\id \in K_\infty$ has $1$ end, and any other path-connected component in $K_\infty$ has $2$ ends.

\subsection{An example with non self-similar $G_0$-action}\label{subsec-nonselfsimilar}

We modify the Schori example so as to obtain a weak solenoid with non self-similar action of the fundamental group $G_0$ on the fibre. We first explain how self-similar actions arise in our context.

Recall the definition of a self-similar group action \cite{Nekr}. Let $S$ be a finite alphabet, and $S^*$ be the set of all finite words over $S$, including the empty word. The set $S^*$ can be thought of as a vertex set of a rooted tree (where the root is given by the empty word), where vertices $v$ and $v'$ are joined by an edge if and only if $v' = v s$ for some $s \in S$. If $\ell(v)$ is the length of the word $v \in S^*$, then $v$ and $v'$ are joined by an edge if and only if their lengths differ by $1$. Let $G_0$ be a group acting faithfully on $X^*$. Then the action of $G_0$ is \emph{self-similar} if and only if for every $s \in S$ and $g \in G$ there exist $t \in S$ and $h \in G$ (necessarily unique) such that
  $$g(sv) = t h(v) ~~ \textrm{for all} ~~ v \in S^*.$$
A nice property of the tree $X^*$ which reflects its self-similarity, is that there is a projection $xv \mapsto v$, which maps a subtree starting at vertex of length $1$ to the whole tree.

Now suppose a weak solenoid $X_\infty = \lim_{\longleftarrow}\{ f^n_{n-1}:X_n \to X_{n-1}, \mathbb{N}\}$ is given by a sequence of covering maps of constant degree $q$, and let $x_0 \in X_0$. Choose an alphabet $S$ on $q$ symbols. Then we can code preimages of $x_0$ in $X_n$ by words in $S^*$ of length $n$, thus associating to the solenoid a rooted tree. Then the fundamental group $G_0$ acts on $S^*$ in an obvious way, and one can come up with many examples where this action is self-similar. Using the associated tree, one can compute end structures of leaves by methods in \cite{BDN}, using the relation of self-similar actions to finite automatons. We now present an example which does not fit into the setting of self-similar actions, but where the number of ends of leaves can be computed by our method.

The most obvious way to construct such an example is to construct a weak solenoid as the inverse limit space of covering maps of variable degree, such that the situation cannot be reduced to the setting of a self-similar action of the fundamental group on the associated subtree by choosing a subsequence or getting rid of a finite number of factors in the sequence.
For that we will alternate $5$-to-$1$ and $3$-to-$1$ coverings of a genus $2$ surface as follows. Consider a sequence
  \begin{align*}5 ~ 3 ~ 3 ~ 5 ~ 3 ~ 3 ~ 3~ 5 ~3 ~3 ~3 ~3 ~5 ~3 \ldots\end{align*}
where the number $5$ occurs infinitely many times, and the number of occurences of $3$ between two occurences of $5$ grows by $1$ at each step. Each number in the sequence would correspond to the degree of a covering map $f^i_{i-1}: X_i \to X_{i-1}$, that is, $f^1_0: X_1 \to X_0$ is a $5$-fold cover, $f^2_1: X_2 \to X_1$ is a $3$-fold cover and so on (see Fig. \ref{fig:NSSsolenoid-identif}). The action of the fundamental group $G_0$ on the fibre of the corresponding solenoid $X_\infty = \lim_{\longleftarrow} \{f^i_{i-1}: X_i \to X_{i-1}\}$ is non-selfsimilar. The sequence of Schreier diagrams for this solenoid is presented in Fig. \ref{fig:NSSsolenoid}.

\begin{figure}
\centering
\includegraphics [width=4.5cm] {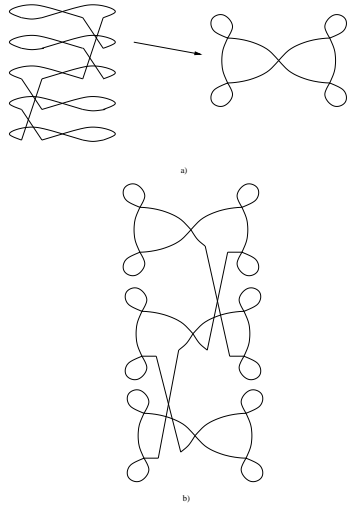}
\caption{Schreier diagrams for the non-selfsimilar solenoid for $i=0,1,2,3$}
 \label{fig:NSSsolenoid-identif}
\end{figure}

We use the method of the Schreier continuum to compute the number of ends of leaves in this solenoid.

\begin{Prop}
Let $X_\infty$ be a weak solenoid as above. Then leaves of $X_\infty$ have the following end structures.
\begin{enumerate}
\item There is a single leaf with $4$ ends.
\item There is a residual set of leaves with $1$ end.
\item There is an uncountable meager set of leaves with $2$ ends.
\end{enumerate}

\begin{figure}
\centering
\includegraphics [width=10.5cm] {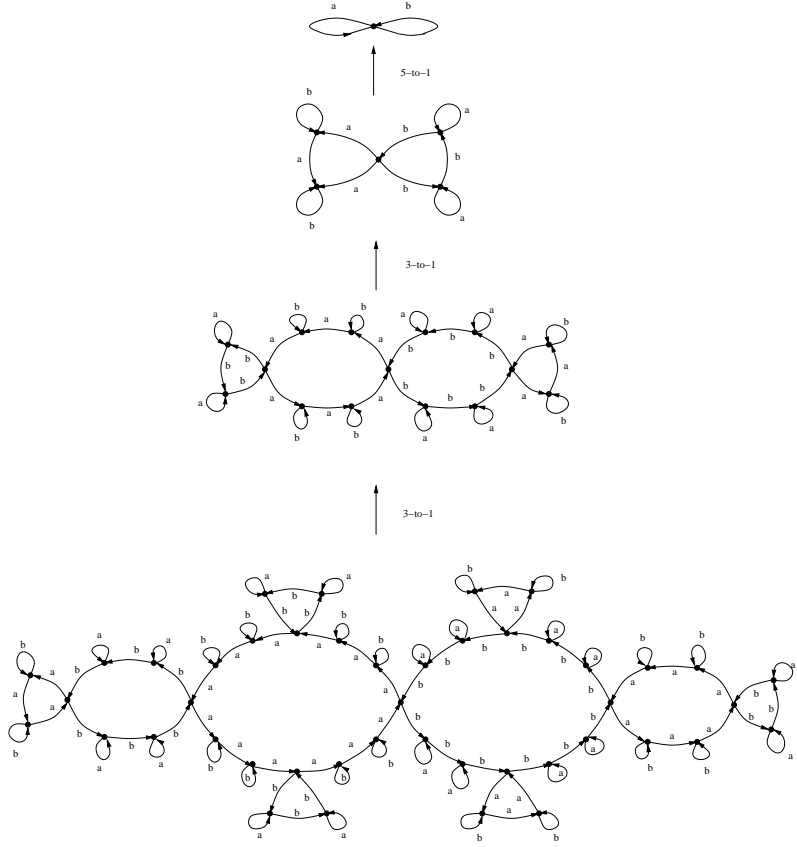}
\caption{Schreier diagrams for the non-selfsimilar solenoid for $i=0,1,2,3$}
 \label{fig:NSSsolenoid}
\end{figure}

\end{Prop}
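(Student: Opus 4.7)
The plan is to transport the Schreier-continuum machinery developed for the Schori example in Section \ref{subsec:SchSchori} to this non-self-similar setting, since neither the inverse-limit construction $\Lambda = \lim_{\longleftarrow}\{\Lambda_k,\nu^k_{k-1}\}$ nor Lemma \ref{Lemma:simplifiedSchreier} ever used self-similarity of the covering tower. First I would set up the group chain $G_0 \supset G_1 \supset G_2 \supset \cdots$, with $G_k = (f^k_0)_*\pi_1(X_k,x_k)$, and write the recursive generating sets analogous to \eqref{eq:recursiveformula}: when the $i$-th bonding map has degree $d_i\in\{3,5\}$, the exponents of $a$ and $b$ are multiplied by $d_i$ rather than by $2$, and the ``transverse'' correction terms are distributed over $d_i-1$ translated copies. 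This produces the simplified Schreier diagrams $\Lambda_k$ drawn in Figure \ref{fig:NSSsolenoid}, and Lemma \ref{Lemma:simplifiedSchreier} reduces the end computation to path-components of $\Lambda$.

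Next I would locate the distinguished leaf $L_{\id}$ through $\id=(\id_k)$ and prove it has $4$ ends by mimicking Proposition \ref{Prop:pathcomponentofid}. The correct radii are now $N_k = \tfrac{1}{2}\prod_{i=1}^{k} d_i - \tfrac{1}{2}$, the diameter of the maximal $a$-cycle in $\mc{A}_k$. With $\{B(\id,N_k)\}$ as cofinal exhaustion, the same four subsets $A_k^{\pm}, B_k^{\pm}$ of vertices reached by long $a$- or $b$-geodesics produce four distinct unbounded ends via a length-projection argument on $\Lambda_i$, and the coboundary of $B_i(\id_i,N_k)$ in $\Lambda_i$ still has exactly $4$ edges (two $a$-edges and two $b$-edges exit the maximal cycle), forcing at most four unbounded components.

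For the remaining leaves the trichotomy of Section \ref{subsubsec:leavesSchoriother} carries over. Points analogous to $q,q'$ in \eqref{eq:pathconnectedoneend}, now built with exponents $\prod_{i\le k} d_i$, give $1$-ended leaves. Points that eventually stay in $\mc{A}_k$ (or $\mc{B}_k$) at every stage after some $n$ form the \emph{dyadic} family; the argument of Lemma \ref{Lemma:dyadicleaf} produces compact exhaustions whose complement has exactly two unbounded components, yielding $2$ ends. Points that infinitely often leave into $\mc{TA}_k\cup\mc{TB}_k$ and infinitely often stay in $\mc{A}_k\cup\mc{B}_k$ are \emph{flip-flopping}; Lemma \ref{Lemma:genericleaf} applies verbatim to give $1$ end — and here the hypothesis that both $3$ and $5$ occur infinitely often, with the number of $3$'s between consecutive $5$'s unbounded, is what guarantees that flip-flops actually occur on a cofinal set. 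Finally, to separate (2) and (3), I would repeat the closedness computation of Lemma \ref{Lemma:closedsets} for the sets $A_m,B_m$ adapted to the variable-base exponents, showing $\mc{AB}=\bigcup_m(A_m\cup B_m)$ is meagre; uncountability of the $2$-ended locus follows by embedding the odometer of the mixed-base sequence $(d_i)$ into $X_\infty$ and observing that distinct orbits project to distinct dyadic-type leaves.

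The main obstacle is bookkeeping rather than conceptual: one must verify that in each $5$-fold step of Figure \ref{fig:NSSsolenoid-identif} the transverse pieces $\mc{TA}_k,\mc{TB}_k$ — which for $d_i=5$ consist of more than a single copy of $\bar{X}_{k-1}$ — are glued so that a flip-flop at stage $k$ genuinely separates the trailing ball from the unbounded $a^{\pm},b^{\pm}$ directions, and that the length estimate $\ell(\gamma)\ge \prod_{i\le k} d_i - O(1)$ used in Proposition \ref{Prop:pathcomponentofid} survives the alternation of degrees. Once the geometry of the $5$-fold identification is fixed, every step of Section \ref{subsec:SchSchori} goes through with only the replacement $2^k\leadsto \prod_{i\le k} d_i$.
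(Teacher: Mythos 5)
Your proposal follows essentially the same route as the paper's proof, which is itself a sketch that defers each case to the corresponding Schori argument: the leaf of $(\id_k)$ has $4$ ends by the argument of Proposition \ref{Prop:pathcomponentofid}, the $q$-type and flip-flopping leaves have $1$ end by the arguments of Lemmas \ref{Lemma:oneendcomponent} and \ref{Lemma:genericleaf}, the dyadic leaves have $2$ ends by the argument of Lemma \ref{Lemma:dyadicleaf}, and uncountability comes from mapping to a solenoid over the circle. The one structural difference is at the end: the paper first checks that \emph{every} leaf falls into one of the three classes (so has $1$, $2$ or $4$ ends) and then invokes Blanc's theorem --- a residual set of $2$-ended leaves would force all leaves to have at most $2$ ends, contradicting the $4$-ended leaf --- to conclude that the $1$-ended leaves are residual; you instead rerun the direct nowhere-density argument of Proposition \ref{Prop:meager} and Lemma \ref{Lemma:closedsets}. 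Both routes work, and yours is more self-contained, though it obliges you to actually carry out the closedness computation in the mixed-base setting rather than just the exhaustive classification.

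One concrete correction to your bookkeeping: under a $d$-fold bonding map the exponent of $a$ (equivalently, the length of the $a$-cycle in $\Lambda_{k+1}$) is multiplied by $(d+1)/2$, not by $d$. For the $3$-fold steps the preimages of $\id_k$ are $\id_{k+1}, a^{2^k}, b^{2^k}$, so the cycle doubles; for the $5$-fold steps they are $\id_{k+1}, a^m, a^{2m}, b^m, b^{2m}$, so the cycle triples. The index of $G_k$ is $\prod_{i\le k} d_i$, but the $a$-cycle has length $\prod_{i\le k}(d_i+1)/2$, and the correct radius is $N_k = \prod_{i\le k}\frac{d_i+1}{2} - \frac{1}{2}$ rather than your $\frac{1}{2}\prod_{i\le k} d_i - \frac{1}{2}$ (compare $N_k = 2^k - \frac{1}{2}$ against index $3^k$ in Lemma \ref{Lemma:metriconGamma}). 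With that replacement your length estimates, the four-edge coboundary count, and the separation of the trailing $\mc{TA}$, $\mc{TB}$ pieces from the $a^{\pm}$, $b^{\pm}$ directions go through as you describe. A minor further point: flip-flopping points exist for any infinite tower of degree $\ge 3$ covers; the particular pattern of $3$'s and $5$'s is there to defeat self-similarity, not to create flip-flops.
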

\begin{proof} By a similar argument to the one in Proposition \ref{Prop:pathcomponentofid}, the path-connected component containing the point $(\id_n)$, where $\id_n$ denotes the coset of $G/G_n$ containing the identity, has $4$ ends. Let $q_0 = \id_0$, $q_1 = a$. For $k \in \mb{N}_0$ $q_{k-1}$ has a finite number of preimages under $f^k_{k-1}$. Let $q_k$ be the one at the maximal distance from $\id_k$. Then by argument similar to the one in Lemma \ref{Lemma:oneendcomponent} one shows that the path-connected component containing $(q_k)$ has $1$ end. Similarly, we can choose $(v_k)$ in such a way that $v_k= a^{\ell_k}$, and the distance between $v_k$ and $\id_k$ eventually grows. Then by an argument similar to that in Lemma \ref{Lemma:dyadicleaf} the path-connected component of $(v_k)$ has $2$ ends. As before, constructing a map from the set of leaves of a solenoid over a circle given by a suitable sequence of integers, one can show that the number of $2$-ended path-connected components in $X_\infty$ is uncountable.

We now have to show that every path-connected components in $X_\infty$ has $1$, $2$ or $4$ ends. Then it follows by the result of Blanc \cite{B} that generically leaves in $X_\infty$ have $1$ end. We use an argument similar to that in Section \ref{subsubsec:leavesSchoriother}. Basically, we need to compute the points whose projections are at variable lengths from $\id_k$, $k \in \mb{N}_0$.

Let $\nu^{k+1}_k$ be a $3$-fold cover. As in Section \ref{subsubsec:leavesSchori}, denote by $\mc{A}_k$ the path-connected component of $\Lambda_k \backslash \{\id_k\}$ containing cosets $a^i=[a^i] \in G/G_k$, and by $\mc{B}_k$ the other one. The vertex $\id_k$ has three preimages under $\nu^{k+1}_k: \Lambda_{k+1} \to \Lambda_k$, those being the vertices $\id_{k+1}, a^{2^k}, b^{2^k}$. Then $\mc{A}_{k+1} \backslash \{a^{2^k}\}$ consists of three path-connected components: the component $\mc{A}^+_{k+1}$ containing vertices $a^i$ for $i<2^k$, the component $\mc{A}^-_{k+1}$ containing vertices $a^i$ for $ i > 2^k$, and the remaining component $\mc{TA}_{k+1}$. Similarly, $\mc{B}_{k+1} \backslash  \{b^{2^k}\}$ consists of path-connected components $\mc{B}^+_{k+1}$, $\mc{B}^-_{k+1}$ and $\mc{TB}_{k+1}$.

Let $\nu^{k+1}_k$ be a $5$-fold cover. As before, denote by $\mc{A}_k$ the path-connected component of $\Lambda_k \backslash \{\id_k\}$ containing cosets $a^i=[a^i] \in G/G_k$, and by $\mc{B}_k$ the other one. The vertex $\id_k$ has five preimages under $\nu^{k+1}_k: \Lambda_{k+1} \to \Lambda_k$, those being the vertices $\id_{k+1}$ and cosets $a^m, a^{2m}, b^m, b^{2m}$. Then $\mc{A}_{k+1} \backslash \{a^{m},a^{2m}\}$ consists of five path-connected components: the components $\mc{A}^j_{k+1}$, $j=1,2,3$, such that $\mc{A}_{k+1}^j$ contains vertices $a^i$ for $(j-1)m<i<jm$, and the components $\mc{TA}^{s}_{k+1}$, $s=1,2$, such that $a^{sm}$ is in the closure of $\mc{TA}^{s}_{k+1}$. Similarly, $\mc{B}_{k+1} \backslash  \{b^m,b^{2m}\}$ consists of path-connected components $\mc{B}^j_{k+1}$, $j=1,2,3$, and $\mc{TB}_{k+1}^s$, $s=1,2$.

Let $v = (v_k) \in \Lambda$ be a vertex, and assume that $v \notin L_\id$. If for some $k$ we have $v_k \in \mc{A}_k$ (resp. $v_k \in \mc{B}_k$), then if $\nu^{k+1}_k$ is $3$-fold, either $v_{k+1} \in \mc{A}_{k+1}^+ \cup \mc{A}^-_{k+1}$ (resp. $v_{k+1} \in \mc{B}_{k+1}^+ \cup \mc{B}^-_{k+1}$) or $v_{k+1} \in \mc{TB}_{k+1}$ (resp. $v_{k+1} \in \mc{TA}_{k+1}$). If $\nu^{k+1}_k$ is $5$-fold, either $v_{k+1} \in \mc{A}_{k+1}^j$, $j=1,2,3$ (resp. $v_{k+1} \in \mc{B}_{k+1}^j$, $j=1,2,3$), or $v_{k+1} \in \mc{TB}_{k+1}^s$, $s=1,2$, (resp. $v_{k+1} \in \mc{TA}_{k+1}^s$, $s=1,2$).

Then the following situations are possible:
  \begin{enumerate}
  \item $v_k \in \mc{A}_{k}^+ \cup \mc{A}^-_{k} \cup \mc{B}_{k}^+ \cup \mc{B}^-_{k}$ if $\nu^k_{k-1}$ is $3$-fold and otherwise $v_k \in \mc{A}_k^1 \cup \mc{A}_k^2 \cup \mc{A}_k^3 \cup \mc{B}_k^1 \cup \mc{B}_k^2 \cup \mc{B}_k^3$ for at most a finite number of $k$'s, then argument similar to that in Lemma \ref{Lemma:oneendcomponent} shows that $L_v$ has $1$ end.
  \item $v_k \in \mc{TA}_{k} \cup \mc{TB}_{k}$, if $\nu^k_{k-1}$ is $3$-fold, and otherwise $v_k \in \mc{TA}_{k}^1 \cup \mc{TA}_{k}^2 \cup \mc{TB}_{k}^1 \cup \mc{TB}_{k}^2$ for at most a finite number of indices, then $v$ is called a \textit{dyadic} point, and by an argument similar to that in Lemma \ref{Lemma:dyadicleaf} $L_v$ has $2$ ends.
  \item there is a cofinal subset $I \in \mb{N}_0$ such that for any $k \in I'$ we have $v_k \in \mc{TA}_k \cup \mc{TB}_k$ if $\nu^k_{k-1}$ is $3$-fold, and otherwise $v_k \in \mc{TA}_{k}^1 \cup \mc{TA}_{k}^2 \cup \mc{TB}_{k}^1 \cup \mc{TB}_{k}^2$, and the subset $\mb{N}_0 \backslash I$ is also cofinal. In this case $v$ is called a \textit{flip-flopping} point. Then by an argument similar to that in Lemma \ref{Lemma:genericleaf} $L_v$ has $1$ end.
  \end{enumerate}
\end{proof}

\section{Conclusions}\label{sec:conclusions}

For a foliated bundle $p: E \to B$ with foliation $\mc{F}$ we have
introduced the notion of the \emph{Schreier continuum} $\Lambda$
which can be thought of as the union of holonomy graphs of leaves of
the minimal set of $\mc{F}$ with suitable topology, and have shown
that in the case when the minimal set of $\mc{F}$ is transversely a
Cantor set, a great deal can be said about asymptotic properties of
leaves in the minimal set by means of the study of the associated
inverse limit representation of $\Lambda$.

In particular, using the method of the Schreier continuum we have shown that, given $n>1$, based on the construction of Schori \cite{S}, one can construct a solenoid where a single leaf has $2n$ ends, there is an uncountable infinity of leaves with $2$ ends which form a meager subset, and an uncountable infinity of leaves with $1$ end, which form a residual subset. We have also shown that the method of Schreier continuum can be used to analyse quite complicated examples, for example, solenoids with non self-similar action of the fundamental group on the fibre.

Blanc \cite{B} proves that if a residual set of points in a foliated space $E$ has leaves with $2$ ends, then almost every leaf in $E$ (with respect to a harmonic measure) has $2$ ends. Blanc also announces an example showing that this need not be the case if $E$ has a residual set of points with leaves with $1$ end. It would be of interest to determine whether this can occur in solenoids.

\begin{Q}
For the structure of ends in a solenoid, does topologically ``almost
all" imply measure-theoretically ``almost all" ?
\end{Q}

\bibliographystyle{123}

{\small \hfill Received September 28, 2011}

{\small \hfill Revised version received January 6, 2012}
\end{document}